\date{}
\newtheorem{theorem}{Theorem}
\newtheorem{lemma}[theorem]{Lemma}
\newtheorem{corollary}[theorem]{Corollary}
\newtheorem{fact}[theorem]{Fact}
\newtheorem{claim}[theorem]{Claim}
\newtheorem{definition}[theorem]{Definition}
\def\squareforqed{\hbox{\rlap{$\sqcap$}$\sqcup$}}
\def\myqed{\ifmmode\squareforqed\else{\unskip\nobreak\hfil
\penalty50\hskip1em\null\nobreak\hfil\squareforqed
\parfillskip=0pt\finalhyphendemerits=0\endgraf}\fi}
\newcommand{\T}{{\mathcal D}}
\newcommand{\deficiency}{\mathrm{def}}
\newcommand{\dist}{\mathrm{dist}}
\title{Lower bounds on collective additive spanners} 
\author{%
  Derek G. Corneil\thanks{Department of Computer Science, %
    University of Toronto, Toronto, Ontario, Canada, %
    \href{mailto:dgc@cs.toronto.edu}{dgc@cs.toronto.edu}} %
\and %
  Feodor F. Dragan\thanks{Department of Computer Science, %
    Kent State University, Kent, Ohio, U.S.A., %
    \href{mailto:dragan@cs.kent.edu}{\{dragan,yxiang\}@cs.kent.edu}}%
  \and %
  Ekkehard K{\"o}hler\thanks{Institut f{\"u}r Mathematik, %
    Brandenburgische Technische Universit{\"a}t, Cottbus-Senftenberg, Germany, %
    \href{mailto:ekkehard.koehler@b-tu.de}{ekkehard.koehler@b-tu.de}}%
  \and %
  Yang Xiang$^{~\dag}$%
}%
\begin{document}

\maketitle

\begin{abstract}
  In this paper we present various lower bound results on collective
  tree spanners and on spanners of bounded treewidth. A graph $G$ is
  said to admit a system of $\mu$ collective additive tree
  $c$-spanners if there is a system $\cal{T}$$(G)$ of at most $\mu$
  spanning trees of $G$ such that for any two vertices $u,v$ of $G$ a
  tree $T\in \cal{T}$$(G)$ exists such that the distance in $T$
  between $u$ and $v$ is at most $c$ plus their distance in $G$.  A
  graph $G$ is said to admit an additive $k$-treewidth $c$-spanner if
  there is a spanning subgraph $H$ of $G$ with treewidth $k$ such that
  for any pair of vertices $u$ and $v$ their distance in $H$ is at
  most $c$ plus their distance in $G$.  Among other results, we show
  that:
  \begin{itemize}
  \item[-] Any system of collective additive tree $1$--spanners must
    have $\Omega(\sqrt[3]{\log n})$ spanning trees for some unit
    interval graphs;

  \item[-] No system of a constant number of collective additive tree
    $2$-spanners can exist for strongly chordal graphs;

  \item[-] No system of a constant number of collective additive tree
    $3$-spanners can exist for chordal graphs;

  \item[-] No system of a constant number of collective additive tree
    $c$-spanners can exist for weakly chordal graphs as well as for
    outerplanar graphs for any constant $c\geq 0$;

  \item[-] For any constants $k \ge 2$ and $c \ge 1$ there are graphs
    of treewidth $k$ such that no spanning subgraph of treewidth $k-1$
    can be an additive $c$-spanner of such a graph.
  \end{itemize}
All these lower bound results apply also to general graphs. Furthermore, they 
complement known upper bound results with tight 
  lower bound results.\footnote{Some of
    the results on unit interval graphs in this paper were announced in~\cite{CorneilDKY05}.}
\end{abstract}
    

\newcommand{\w}{{\sf w}}

\section{Introduction}\label{intro}

A spanning subgraph $H$ of a graph $G$ is called a
{\em spanner} of $G$ if $H$ provides a ``good'' approximation of the
distances in $G$.  More formally, for $c\geq 0$, $H$ is called an {\it
  additive $c$-spanner} of $G$ if for any pair of vertices $u$ and $v$
their distance in $H$ is at most $c$ plus their distance in $G$
\cite{LieShe}. In this case, $c$ is called a {\em surplus}. If $H$ is
a spanning tree of $G$ then $H$ is called an {\it additive tree
  $c$-spanner} of $G$ \cite{KLM}.  The {\it sparsest additive
  $c$-spanner problem} asks for a given graph $G$ to find an additive
$c$-spanner $H$ with the smallest number of edges.  (Similar
definitions can be given for multiplicative $c$-spanners
\cite{CaiC95,Chew,PeSc,PelUll}; however since we are only concerned
with additive spanners, we will often omit ``additive''.)

Motivated by problems of designing compact and efficient routing and
distance labeling schemes in networks, in \cite{DYL}, a new notion of
{\em collective tree spanners}
of graphs was introduced.  This notion is slightly {\sl weaker} than
the one of a tree spanner and slightly {\sl stronger} than the notion
of a sparse spanner.  We say that a graph $G=(V,E)$ {\em admits a
  system of $\mu$ collective additive tree $c$-spanners} if there is a
system $\cal{T}$$(G)$ of at most $\mu$ spanning trees of $G$ such that
for any two vertices $u,v$ of $G$ a spanning tree $T\in \cal{T}$$(G)$
exists such that the distance in $T$ between $u$ and $v$ is at most
$c$ plus their distance in $G$. We say that system ${\cal T}(G)$
collectively $+c$-spans (or simply $c$-spans as we are only concerned
with additive spanners) the graph $G$. Clearly, if $G$ admits a system
of $\mu$ collective additive tree $c$-spanners, then $G$ admits an
additive $c$-spanner with at most $\mu\times (n-1)$ edges (take the
union of all those trees), and if $\mu=1$ then $G$ admits an additive
tree $c$-spanner. Note also that any graph on $n$ vertices admits a
system of at most $n-1$ collective additive tree 0-spanners (take
$n-1$ Breadth-First-Search--trees (also known as \emph{shortest path
  trees}) rooted at different vertices of $G$).

One of the main motivations to introduce this new concept stems from
the problem of designing compact and efficient routing
and distance labeling schemes in graphs.  In
\cite{FrGa01,ThZw01}, a shortest path routing labeling scheme for
trees is described that assigns each vertex of an $n$-vertex tree an
$O(\log n)$-bit label. Given the label of a source vertex and the
label of a destination, it is possible to compute in constant time,
based solely on these two labels, the neighbor of the source that
heads in the direction of the destination. If an $n$-vertex graph $G$
admits a system of $\mu$ collective additive tree $c$-spanners, then
$G$ admits a routing labeling scheme of deviation (i.e., additive
stretch) $c$ with addresses and routing tables of size $O(\mu
\log^2n)$ bits per vertex. Once computed by the sender in $\mu$ time
(by choosing for a given destination an appropriate tree from the
collection to perform routing), headers of messages never change, and
the routing decision is made in constant time per vertex (for details
see \cite{DYC,DYL}).
Recently, collective tree spanners found applications also in the
problem of location-aware synchronous rendezvous in networks
\cite{CollinsCGKM11} and in the k-Server problem under the advice
model of computation when the underlying metric space is sparse
\cite{GuptaKL13}.

To date, a number of papers establishing various upper bound results
and some ``easy-to-derive'' lower bound results on collective tree
spanners of particular classes of graphs have been published
\cite{CorneilDKY05,DraganA14,DraganCKX12,DraganY10,DYC,DYL,DraganYX08,Gup,YanXD12}.
The definitions of the various families of graphs studied in this
paper appear in Subsection \ref{def}; for the definitions of other
mentioned families either see the referenced manuscript or
\cite{Classes}.  A system of at most $O(\log n)$ collective additive
tree $2$--spanners exists for all chordal graphs \cite{DYL}, chordal
bipartite graphs \cite{DYL}, homogeneously orderable graphs
\cite{DraganYX08}, and circle graphs \cite{DraganCKX12}. A system of
two collective additive tree $2$--spanners exists for all AT-free
graphs which contains interval graphs, permutation graphs and
cocomparability graphs \cite{DYC} and for all circular-arc graphs
\cite{DYL}.  These results were complemented by lower bounds, which
say that any system of collective additive tree $1$--spanners must
have $\Omega(\sqrt{n})$ spanning trees for some chordal graphs and
$\Omega(n)$ spanning trees for some chordal bipartite graphs and some
cocomparability graphs \cite{DYL}.  Furthermore, any $k$-chordal graph
admits a system of at most $\log_{2} n$ collective additive tree
$(2\lfloor k/2\rfloor)$--spanners \cite{DYL}, any graph with
tree-breadth $\rho$ has a system of at most $\log_2n$ collective
additive tree $(2\rho\log_2n)$-spanners \cite{DraganA14}, and any
graph having a dominating shortest path admits a system of two
collective additive tree 3-spanners and a system of five collective
additive tree 2-spanners \cite{DYC}. All collective tree spanners
mentioned above can be constructed in linear or low-polynomial time.
It is interesting to note also that there is a polynomial time
algorithm that, given an $n$-vertex graph $G$ admitting a
(multiplicative) tree $c$-spanner, constructs a system of at most
$\log_2 n$ collective additive tree $O(c\log n)$-spanners of $G$ (see
\cite{DraganA14} for details). 
This result was improved in a recent paper \cite{BeRa22}: for every graph $G$ admitting a (multiplicative) tree $c$-spanner, one can efficiently construct one spanning tree $T$ such that $d_T(x,y)\le d_G(x,y)+O(c\log n)$ holds for every $x,y\in V$ (turning a multiplicative distortion into an additive surplus with an additional logarithmic factor). 
Recall that the problem of finding for
a given graph a tree $c$-spanner with minimum $c$ is NP-hard
\cite{CaiC95}.



Collective tree spanners of {\em Unit Disk Graphs} (UDGs) (which often
model wireless ad hoc networks) were investigated in \cite{YanXD12}.
It was shown that every $n$-vertex UDG $G$ admits a system ${\cal
  T}(G)$ of at most $2 \log_{\frac{3}{2}} n+2$ spanning trees of $G$
such that, for any two vertices $x$ and $y$ of $G$, there exists a
tree $T$ in ${\cal T}(G)$ with $\dist_T(x,y)\leq 3\cdot \dist_G(x,y)
+12$.  That is, the distances in any UDG can be approximately
represented by the distances in at most $2 \log_{\frac{3}{2}} n+2$ of
its spanning trees.  Based on this result a new {\em compact and low
  delay routing labeling scheme} was proposed for Unit Disk Graphs
(see \cite{YanXD12} for more details). 

There are several related lines of research. Collective {\em multiplicative} tree spanners were considered in \cite{chang2025lighttreecoversrouting,gupta2006oblivious,YanXD12}. They called in \cite{gupta2006oblivious} {\em spanning tree covers}. Approximation of metric spaces by a small number of tree metrics, so-called {\em tree covers}, were considered in \cite{BARTAL202226,10353154,chang_et_al:LIPIcs.SoCG.2024.37} (see also papers cited therein). Probabilistic approximation of metric spaces by a small number of tree metrics were considered in \cite{bartal1996probabilistic,charikar1998approximating}.  
 
\subsection{Results of this paper}\label{results}

In this paper we establish various non-trivial lower bounds on
collective tree spanners of particular classes of graphs such as
(unit) interval graphs, (strongly) chordal graphs, weakly chordal
graphs, outerplanar graphs, and AT-free graphs.  The definitions of
all these graph classes appear in Subsection~\ref{def}. The following
inclusions hold amongst them.

\medskip

{\centering \small{\sf
    \begin{tabular}{ccccccccc}
      unit interval&$\subset$&interval&$\subset$&strongly chordal&$\subset$&chordal&$\subset$&weakly chordal\\
      &         &        &$\subset$&cocomparability &$\subset$&AT-free&         &              \\
      &         &        &         &                &         &       &         &              \\
                            
      outerplanar&$\subset$&planar& & & & & & \\               
    \end{tabular}}}

\bigskip

Note that in many cases our lower bounds correspond to known upper
bounds; these results are summarized in Table \ref{tbl:summary}. Clearly, all these lower bound results apply also to general graphs. 

No system of a constant number of collective additive tree
$1$-spanners can exist for unit interval graphs. In fact, any system
of collective additive tree $1$--spanners must have
$\Omega(\sqrt[3]{\log n})$ spanning trees for some unit interval
graphs.  The same lower bound holds for interval graphs, strongly
chordal graphs and Unit Disk Graphs as they all contain unit interval
graphs. These results are presented in Section~\ref{lxwer}.
  

In Section~\ref{master} we state and prove a meta theorem that shows
that if a particular family $\cal{F}$ of graphs contains a ``gadget''
with specific properties (expressed as functions of $c$ and $d$) then
no $d$ trees can collectively $+c$ span a sufficiently large $\cal{F}$
graph.  The results of Sections~\ref{sec:coll-tree-spann}
and~\ref{sec:lower-bound-coll} contain corollaries of this meta
theorem applied to (weakly) chordal and strongly chordal graphs
respectively; see Table \ref{tbl:summary}.

\begin{table}
  \small
  \centering
  \begin{tabular}{l c c c}
    \hline\hline
    Graph Class & Surplus & $\#$ of Trees (l.b. $\leftrightarrow$ u.b.) & Source  \\
    \hline\hline
    Interval & +2 & 1 & \cite{KLM}\\
             & +1 & $\Omega(\sqrt[3]{\log n})  \leftrightarrow  O(\log D)$ &[here],\cite{CorneilDKY05}  \\
    \hline
    AT-free          & +3 & 1 & \cite{KLM} \\
                     & +2 & 2 & \cite{DYC} \\
                     & +1 & $\Theta({n})$ & \cite{DYL}\\                     
    \hline
    Strongly Chordal & +3 & 1 & \cite{BrandstadtCD99} \\
                     & +2 & no constant $\leftrightarrow  \log_2 n$ & [here],\cite{DYL} \\
                     & +1 & $\Omega(\sqrt[3]{\log n}) \leftrightarrow  O(n)$ & [here],[trivial] \\
    
    \hline
    Chordal          & $+c, c>3$ &  constant? & [open problem] \\
                     & $+c, c=2,3$ & no constant $\leftrightarrow  \log_2 n$ & [here],\cite{DYL} \\
                     & +1 & $\Omega(\sqrt{n})  \leftrightarrow  O(n)$ & \cite{DYL},[trivial] \\
    \hline
    Weakly Chordal   & $+c, \forall c\geq 2$ & no constant $\leftrightarrow  4\log_2 n$ & [here],\cite{DraganY10} \\
                     & +1 & $\Theta({n})$ & \cite{DYL}\\  
    \hline
    Outerplanar      & $+c, \forall c\geq 0$ & no constant $\leftrightarrow  3\log_2 n$ & [here],\cite{DraganY10,Gup}  \\                                                              \hline\hline
  \end{tabular}  
  \caption{Summary of upper and lower bound results on collective additive tree spanners.}\label{tbl:summary}
\end{table} 


Furthermore, in Section~\ref{sec:spanners-with-high}, we establish a
lower bound for a new type of spanners. In \cite{DrFoGo}, {\em
  spanners of bounded treewidth} were introduced, motivated by the
fact that many algorithmic problems are tractable on graphs of bounded
treewidth, and a spanner $H$ of $G$ with small treewidth can be used
to obtain an approximate solution to a problem on $G$. In particular,
efficient and compact distance and routing labeling schemes are
available for bounded treewidth graphs (see, e.g.,
\cite{DraganY10,Gup} and the papers cited therein), and they can be
used to compute approximate distances and route along paths that are
close to shortest in $G$.  The {\em additive $k$-treewidth
  $c$-spanner} problem asks, for a given graph $G$, an integer $k$ and
a number $c\geq 0$, whether $G$ admits an additive $c$-spanner of
treewidth at most $k$. Similarly, the {\em multiplicative
  $k$-treewidth $c$-spanner} problem can be defined.  Every connected
graph with $n$ vertices and at most $n-1+m$ edges is of treewidth at
most $m+1$ and hence this problem is a generalization of the tree
$c$-spanner and the sparsest $c$-spanner problems. Furthermore,
$t$-spanners of bounded treewidth have much more structure to exploit
algorithmically than sparse $t$-spanners (which have a small number of
edges but may lack other nice structural properties).

The multiplicative $k$-treewidth $c$-spanner problem was considered in
\cite{DrFoGo} and \cite{FominGL11}. It was shown that the problem is
linear time solvable for every fixed constants $c$ and $k$ on the
class of apex-minor-free graphs \cite{DrFoGo}, which includes all
planar graphs and all graphs of bounded genus, and on the graphs with
bounded degree \cite{FominGL11}. See also \cite{CohenAddad2020OnLS,10353171,FL2022} for low treewidth embeddings of planar and minor-free metrics. 

In Section~\ref{sec:spanners-with-high}, we show the following lower
bound result: for any constants $k \ge 2$ and $c \ge 1$ there are
graphs of treewidth $k$ such that no spanning subgraph of treewidth
$k-1$ can be a $c$-spanner of such a graph.

\subsection{Notation and definitions}\label{def}

All graphs occurring in this paper are connected, finite, undirected,
loopless and without multiple edges. In a graph $G=(V,E)\: (n=|V|,
m=|E|)$ the {\it length} of a path from a vertex $v$ to a vertex $u$
is the number of edges in the path. The {\it distance}
$\dist_G(u,v)$ between the vertices
$u$ and $v$ is the length of a shortest path connecting $u$ and $v$.


An independent set of three vertices such that each pair is joined by
a path that avoids the neighborhood of the third is called an {\em
  asteroidal triple} (AT). A graph $G$ is an {\em AT-free graph} if it
does not contain any asteroidal triples \cite{COS96}. A graph is {\em
  chordal} if it does not contain any induced cycles of length greater
than 3.  A graph is an \emph{interval graph} if one can associate with
each vertex an interval on the real line such that two vertices are
adjacent if and only if the corresponding intervals have a nonempty
intersection. Furthermore, an interval graph is a \emph{unit interval
  graph} if all intervals are of the same length.  Unit interval
graphs are equivalent to \emph{proper interval graphs} where no
interval can properly contain any other interval.  It is well known
that a graph is an interval graph if and only if it is both a chordal
graph and an AT-free graph \cite{LB}.

A graph is an {\em outerplanar graph} if it can be drawn in the plane
without crossings in such a way that all of the vertices belong to the
unbounded face of the drawing.  A graph $G$ is a {\em cocomparability
  graph} if it admits a vertex ordering $\sigma=[v_1,v_2,\dots, v_n]$,
called a {\em cocomparability ordering}, such that for any $i<j<k$, if
$v_i$ is adjacent to $v_k$ then $v_j$ must be adjacent to at least one
of $v_i$, $v_k$. It is known that the class of cocomparability graphs
is a superclass of interval graphs and a subclass of AT-free graphs
(see \cite{Gol-book}).

A graph $G$ is {\em strongly chordal} if it is
chordal and every cycle of even length ($\ge 6$)
in $G$ has an odd chord, i.e., an edge that connects two vertices that
are an odd distance ($> 1$) apart from each
other in the cycle.
All strongly chordal graphs are chordal and they contain all interval
graphs \cite{faber-strongly-chordal-83}.

A graph is \emph{weakly chordal} (also called \emph{weakly
  triangulated}) if neither $G$ nor its complement $\overline{G}$
contain an induced hole (cycle of size at least 5).  A graph $G$ is
\emph{house-hole-domino-free (HHD-free)} if it does not contain the
house, the domino, and holes as induced subgraphs (see
Fig.~\ref{hhd-fig}).  Clearly, chordal graphs are strictly contained
in both weakly chordal and HHD-free graphs.

\begin{figure}[htb]
  \begin{center} %
    \subcaptionbox{\label{fig:sub:house}}{      \begin{tikzpicture}[new set=import nodes,scale=0.18]%
        \begin{scope}[nodes={set=import nodes},%
          nodes={%
            inner sep=1.5pt,%
            draw,%
            circle,%
            draw=black,%
            fill=black%
          }%
          ]%
          \node (1) at (0,0) {$$}; %
          \node (2) at (6,0) {$$}; %
          \node (3) at (0,6) {$$}; %
          \node (4) at (6,6) {$$}; %
          \node (5) at (3,9) {$$}; %
        \end{scope}

        \graph[edges={%
          black,shorten >=-0.5pt,shorten <=-0.5pt,very thin %
        }] {%
          (import nodes); %

          1 -- 2;%
          1 -- 3;%

          2 --  4;%

          3 -- 4;%
          3 -- 5;%

          4 -- 5;%
        };%
      \end{tikzpicture}}%
    \hspace{2cm}%
    \subcaptionbox{\label{fig:sub:domino}}{      \begin{tikzpicture}[new set=import nodes,scale=0.18]%
        \begin{scope}[nodes={set=import nodes},%
          nodes={%
            inner sep=1.5pt,%
            draw,%
            circle,%
            draw=black,%
            fill=black%
          }%
          ]%
          \node (1) at (0,0) {$$}; %
          \node (2) at (6,0) {$$}; %
          \node (3) at (0,6) {$$}; %
          \node (4) at (6,6) {$$}; %
          \node (5) at (0,12) {$$}; %
          \node (6) at (6,12) {$$}; %
        \end{scope}

        \graph[edges={%
          black,shorten >=-0.5pt,shorten <=-0.5pt,very thin %
        }] {%
          (import nodes); %

          1 -- 2;%
          1 -- 3;%

          2 --  4;%

          3 -- 4;%
          3 -- 5;%

          4 -- 6;%

          5 -- 6;%
        };%
      \end{tikzpicture}}%
    \end{center}%
    \caption{\label{hhd-fig} \subref{fig:sub:house} A house, %
    \subref{fig:sub:domino} A domino.}%
\end{figure}%


\section{Unit Interval Graphs }\label{lxwer}

Independently McKee~\cite{McK} and Kratsch et al.~\cite{KLM} showed
that no single tree can $c$-span a chordal graph for any constant $c$.
We now show a similar result for collectively 1-spanning a unit
interval graph.

\begin{theorem}\label{lower}
  No constant number of trees can collectively 1-span a unit interval
  graph.
\end{theorem}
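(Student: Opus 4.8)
The plan is to exploit the rigidity that spanning trees must obey on a unit interval graph equipped with its natural vertex ordering, and I would make everything concrete on the power of a path $P_N^k$: take vertices $v_1,\dots,v_N$ carrying the equal-length intervals $[i,i+k]$, so that $v_iv_j$ is an edge iff $|i-j|\le k$ and $\dist_G(v_i,v_j)=\lceil |i-j|/k\rceil$. This is a unit interval graph for every $N$ and $k$. The first step is the (easy but crucial) structural lemma that every edge of \emph{any} spanning tree joins two vertices whose indices differ by at most $k$, since tree edges are graph edges. From this I would extract the rigidity I need: if the far pair $(v_1,v_{1+ks})$, which is at distance $s$, is $1$-spanned by a tree $T$, then its tree path $v_1=v_{i_0},v_{i_1},\dots,v_{i_\ell}=v_{1+ks}$ has length $\ell\le s+1$ with each $|i_{t+1}-i_t|\le k$; as the indices must climb by a total of $ks$ in at most $s+1$ steps of size at most $k$, the total slack is only $k$. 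Hence a $1$-spanning tree path for a long pair is forced to be almost monotone, using almost exclusively maximal $+k$ jumps, and is pinned to within $O(k)$ of the arithmetic progression $1,1+k,1+2k,\dots$. This near-monotone pinning is the engine of the whole argument.

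Next I would assume for contradiction that some fixed constant $\mu$ collectively $1$-spans every unit interval graph, and build a recursive gadget that defeats it. Starting from a small unit interval graph $G_1$ that no single tree can $1$-span (in the spirit of the McKee and Kratsch--Lagergren--Manne result cited just above the theorem), I would form $G_{t+1}$ by chaining many rescaled copies of $G_t$ along a long path power, keeping the union a unit interval graph, and designing the interface so that any collective $1$-spanner of $G_{t+1}$ induces on some single copy a collective $1$-spanner of $G_t$ using \emph{one fewer} tree. The mechanism is that the global long pair spanning all of $G_{t+1}$ forces, by the rigidity lemma, one of the trees to be pinned to the near-monotone through-route; that tree has no remaining freedom to supply the short local detours needed inside a copy, so it is ``used up'' there. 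Iterating, $t$ trees cannot $1$-span $G_t$; since $t$ is unbounded while $\mu$ is constant, this already proves Theorem~\ref{lower}. Tracking how $|G_{t+1}|$ grows in terms of $|G_t|$ then upgrades this to the refined bound $t=\Omega(\sqrt[3]{\log n})$ announced immediately after the statement.

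The hard part will be the induction step, namely making the interface clean enough that (i) the object induced on a copy is literally again a unit interval graph carrying the same kind of far pairs, so the induction closes, and (ii) the ``used-up tree'' argument is genuinely valid rather than merely plausible. This is precisely where the rigidity lemma must be pushed: I would need to quantify how many distinct trees can simultaneously avoid being pinned, and a copy is ``spoiled'' if any one of the at most $t$ trees deviates near it, at one of only $O(t)$ admissible deviation positions allowed by the slack. A double pigeonhole over the trees and over these deviation positions then forces on the order of $2^{\Theta(t^2)}$ copies at level $t$ to guarantee one clean copy, which is exactly the blow-up $\log n\sim t^3$ that produces a cube root of a logarithm rather than a plain logarithm. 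Getting this counting to interlock with the unit interval structure, so that scaling and overlap keep every level inside the class, is where I expect essentially all of the technical effort to concentrate.
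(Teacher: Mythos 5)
Your rigidity lemma for path powers is correct, and the global architecture (chain of gadgets, pigeonhole, cube-root accounting) points in the right direction, but there is a genuine gap at exactly the step you defer: the ``used-up tree'' mechanism. Pinning constrains only the unique tree path between the two \emph{global} endpoints; a spanning tree consists of much more than this backbone path, and nothing in your argument prevents the pinned tree from also containing off-backbone edges inside a copy that supply all the short local detours that copy needs. Worse, since your copies are themselves built from path powers laid along the same axis, the pinned near-monotone through-path may itself $1$-span the copy's far pairs, so pinning can actively \emph{help} a tree rather than exhaust it. What is needed is an exclusivity statement, and the paper obtains it not from rigidity but from acyclicity: its gadget is a clique $K_m$ plus two nonadjacent universal vertices $x,y$, chained into a long unit interval graph; Claim~\ref{lb} (proved using the pigeonholed fact that every tree restricts \emph{identically} to three gadgets $A,B,C$) shows that $1$-spanning the pair $(A_i,C_i)$ forces the tree to contain the edges $xi$ and $yi$ in all of $A$, $B$ and $C$, and no tree can do this for two indices $i\neq j$ because $x\!-\!i\!-\!y\!-\!j\!-\!x$ would be a cycle. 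Hence each tree serves at most one of the $m$ clique-vertex pairs, and $m-1$ trees fail: a one-level counting argument with no recursion and no tree-stripping.

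A second, related gap is the induction interface itself: a spanning tree of $G_{t+1}$ restricted to a copy of $G_t$ is in general only a forest, and the tree path between two vertices of the copy may exit the copy and re-enter it, so it is neither true nor even well defined as stated that a collective $1$-spanner of $G_{t+1}$ ``induces on some copy a collective $1$-spanner of $G_t$ with one fewer tree.'' Repairing this is not bookkeeping; it requires designing the interfaces so that any escaping path is automatically too long, which is again what the paper's universal-vertex gadget accomplishes (all traffic between gadgets passes through the cut vertices $x,y$, and the distance computation in Claim~\ref{lb} charges a $+2$ penalty to any tree lacking the forced edges). So the missing content is precisely the concrete gadget together with the acyclicity/cut-vertex argument; the rigidity lemma, which is the part you did prove, is not what carries the paper's proof.
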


\begin{proof}
  First we will show that two trees do not suffice and then show how
  to extend this result to any constant number of trees.

  The general ``gadget'' will be a $K_3$
  plus two independent universal vertices
  $x$ and $y$ (i.e. we have a $K_5$ with the edge $xy$ missing).  The
  vertices of the $K_3$ will be labelled 1, 2, 3.  Now make a
  sufficiently long chain of these gadgets by identifying the $y$
  vertex of a gadget with the $x$ vertex of its right neighbor.  It is
  straightforward to confirm that this graph $G$ is a unit interval
  graph.  Consider two trees $T_1$ and $T_2$ that supposedly
  collectively 1-span $G$.  By making the chain sufficiently long, by
  the ``pigeonhole principle'', we are guaranteed that there are three
  gadgets in $G$ namely, $A, B$ and $C$ where $A$ is left of $B$ which
  is left of $C$ such that:

  \begin{itemize}
  \item $T_1$ restricted to $A, B$ and $C$ is exactly the same
    spanning tree for all three gadgets.  Exactly the same means from
    the labelled vertex point of view,

  \item $T_2$ restricted to $A, B$ and $C$ is also exactly the same
    spanning tree for all three gadgets.  Note that $T_1$ restricted
    to $\{A,B,C\}$ is not necessarily the same as $T_2$ restricted to
    $\{A,B,C\}$.
  \end{itemize}

  The vertices in $A, B$ and $C$ will be denoted $A_x, B_3, C_y$,
  where, for example, $A_x$ refers to the $x$-vertex of $A$.  We say
  that a tree provides a 1-approximating path between two vertices if
  the distance between the vertices in the tree is at most 1 more than
  their distance in $G$.  We now show that in order for $T_1$ or $T_2$
  to provide such an approximating path, certain edges of $G$ must be
  present in the tree.

  \begin{claim}\label{lb}
    Let $i$ be an element of $\{1,2,3\}$. If either $T_1$ or $T_2$
    provides a 1-approxi\-mating path between $A_i$ and $C_i$, then it
    must contain the $xi$ and $yi$ edges in all of $A, B$ and $C$.
  \end{claim}

  \begin{proof}
    Without loss of generality, assume that $T_1$ provides the
    1-approximating path between $A_i$ and $C_i, i \in \{1,2,3\}$.
    Such a path requires either $A_i$ to be adjacent to $A_y$ and/or
    $C_i$ to be adjacent to $C_x$.  Without loss of generality, assume
    $C_i$ is adjacent to $C_x$; thus since $T_1$ when restricted to
    $A, B$ and $C$ is exactly the same, $A_i$ is adjacent to $A_x$ and
    $B_i$ is adjacent to $B_x$ as well.  We now show that in all three
    of $A, B$ and $C, i$ is also adjacent to $y$.  Suppose not; now in
    each gadget, the distance between $i$ and $y$ is at least 2 which
    means that the tree path between $A_i$ and $C_i$ must be at least
    2 greater than the distance in $G$ (since in $T_1$ the distance
    between $B_x$ and $B_y$ must be at least 3 by following the edge
    $B_xB_i$ and the path between $B_i$ and $B_y$).
  \end{proof}

  From the claim, it is clear that each of $T_1$ and $T_2$ can provide
  at most one path between $A_1, C_1$ or $A_2, C_2$ or $A_3, C_3$ and
  thus at least three trees are required to 1-approximate $G$.

  To generalize this argument, i.e., to show that at least $k$ trees
  are required, merely replace the $K_3$ in the gadget by a $K_k$.
  The same use of the claim shows that $k-1$ trees are not enough.
\end{proof}

Using the above construction one can also show the following theorem.

\begin{theorem}
  The number of trees needed to collectively tree 1-span an unit
  interval graph is $\Omega(\sqrt[3]{\log n})$.
\end{theorem}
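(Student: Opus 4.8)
The plan is to re-run the gadget construction of Theorem~\ref{lower} but now treat the clique size $k$ and the chain length $N$ as parameters chosen as functions of $n$, and to track precisely how long the chain must be in order to force the repetition that the argument consumes. First I would fix the number of available trees $d$ and ask how many gadgets guarantee three of the same ``type''. Each gadget has $k+2$ labelled vertices ($x$, $y$, and $1,\dots,k$), and the restriction of any single spanning tree to these vertices is a forest; the number of forests on $k+2$ labelled vertices is at most $2^{\binom{k+2}{2}}=2^{O(k^2)}$. I would then define the \emph{type} of a gadget to be the tuple $(T_1,\dots,T_d)$ of the $d$ restrictions read off from the labelled vertices, so that the number of distinct types is at most $M:=2^{O(dk^2)}$.

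Next I would apply the pigeonhole principle: in a chain of $N=2M+1$ gadgets some type occurs at least three times, yielding gadgets $A,B,C$ (ordered left to right) on which every $T_1,\dots,T_d$ restricts to the same labelled forest. This is exactly the hypothesis used in Claim~\ref{lb} and in the concluding count of Theorem~\ref{lower}: a single tree containing the edges $xi,yi,xi',yi'$ in gadget $B$ would close a $4$-cycle, so each tree can $1$-approximate at most one of the $k$ pairs $(A_i,C_i)$, and $d$ trees can therefore cover at most $d$ of them. Setting $d=k-1$ leaves some pair uncovered, contradicting the assumption that the trees collectively $1$-span the graph; hence at least $k$ trees are required for this graph.

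Finally I would invert the size estimate. With $d=k-1$ the number of types is $M=2^{O(k^3)}$, so $N=2M+1=2^{O(k^3)}$; since each gadget contributes $\Theta(k)$ vertices (consecutive gadgets share one vertex via the identification of $y$ with the next $x$), the resulting unit interval graph $G_k$ has $n=\Theta(Nk)=2^{O(k^3)}$ vertices. Thus $\log n=O(k^3)$, i.e.\ $k=\Omega(\sqrt[3]{\log n})$, and since $G_k$ needs at least $k$ trees to be collectively $1$-spanned, the bound $\Omega(\sqrt[3]{\log n})$ follows.

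The main obstacle, and where I would spend most care, is the counting step: making the notion of ``type'' precise so that ``same type'' is exactly what the proof of Claim~\ref{lb} consumes (the identical labelled restriction of \emph{every} tree across all three gadgets), and keeping the forest count tight enough that $d=k-1$ produces only a cubic exponent in $k$ (a looser bound such as $2^{O(k^2\log k)}$ would still give $\sqrt[3]{\log n}$ up to constants, but the clean $2^{O(k^2)}$ forest bound keeps the exponent at $k^3$). A secondary point to verify is that three gadgets taken in left-to-right order really do reproduce the separation used in Claim~\ref{lb}, which holds because each shared vertex $x=y$ is a cut vertex of the chain, so the middle gadget $B$ separates $A$ from $C$.
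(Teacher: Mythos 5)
Your proposal is correct and follows essentially the same route as the paper: the same chain-of-gadgets construction, a pigeonhole step yielding three gadgets on which every tree restricts identically, Claim~\ref{lb} to show each tree can serve at most one pair $(A_i,C_i)$, and the same inversion of a $2^{O(k^3)}$ size bound to get $k=\Omega(\sqrt[3]{\log n})$. The only (harmless) differences are that you apply the pigeonhole once to $d$-tuples of restrictions rather than iterating it tree by tree as the paper does, and that you count labelled forests ($2^{O(k^2)}$) where the paper invokes Cayley's formula for spanning trees --- your count is in fact the more careful one, since the restriction of a spanning tree of the whole graph to a single gadget need only be a forest.
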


\begin{proof}
  Consider the construction of the proof of Theorem~\ref{lower} for
  the case of $k$ trees.  That means each gadget in the constructed
  gadget graph $G$ consists of a $K_{k+1}$
  plus two nonadjacent universal vertices
  $x$ and $y$.  For the construction of the proof we have to make sure
  that there are three particular gadgets $A$, $B$, and $C$ such that
  each of the $k$ spanning trees of $G$, when restricted to these
  three gadgets are identical (note that the sub-tree covering $A$ in
  $T_1$ might be a different one than the sub-tree covering $A$ in
  $T_2$ but the sub-tree covering $A$ in $T_1$ is exactly the same as
  the sub-tree covering $B$ in $T_1$ and the sub-tree covering $C$ in
  $T_1$).

  How large must the graph $G$ be to guarantee this property?  For
  this we have to count how many different spanning trees exist for
  our gadget.  Since each gadget is a $K_{k+3}$ minus one edge, by
  Cayley's formula, there cannot be more than $(k+3)^{(k+1)}$
  different spanning trees for it.

  Hence, if the constructed gadget graph has at least $3
  (k+3)^{(k+1)}$ gadgets, then, by the pigeon hole principle, a
  spanning tree of this graph has to contain at least three gadgets
  $A$, $B$, $C$ that are covered by identical trees (in fact $2
  (k+3)^{(k+1)} + 1$ would suffice here).

  Next consider two spanning trees $T_1$, $T_2$ of $G$.  If we
  construct $G$ to consist of at least $3 (k+3)^{(k+1)} (k+3)^{(k+1)}$
  gadgets, then, by the pigeon hole principle, in the tree $T_1$ at
  least $3 (k+3)^{(k+1)}$ gadgets have to be covered by identical
  sub-trees.  Now, using the above argument, there have to be at least
  $3$ of those $3 (k+3)^{(k+1)}$ gadgets that are also covered by
  identical trees in the second tree $T_2$.

  If we now apply this argument $k$ times then it follows that a
  gadget graph consisting of $3 (k+3)^{(k+1)} (k+3)^{(k+1)k} = 3
  (k+3)^{(k+1)^2}$ gadgets has to contain at least $3$ gadgets
  that are covered in each of the $k$ trees by identical sub-trees.
  Hence these $k$ trees can not collectively $1$-span the gadget graph
  $G$.

  To get a lower bound on the number of trees needed to collectively
  1-span a unit-interval graph with $n$ vertices, let $n$ be the
  number of vertices of the gadget graph $G$.  Since each gadget
  consists of a $K_{k+1}$ plus the two independent universal vertices,
  it follows that $n = 3 (k+3)^{(k+1)^2} (k+2) + 1$.
  \begin{eqnarray}
    n         & =    & 3 (k+3)^{(k+1)^2} (k+2) + 1 \nonumber\\
              & \leq & 3 (k+3)^{(k+1)^{2}+1}.       \nonumber \\
    \log{(n)} & \leq & ((k+1)^2 +1) \log{(3 (k+3))}\nonumber \\
              & \leq & (2 k)^2 \log{(3 (k+3))}     \nonumber\\
              & \leq & (2 k)^2 k.                   \nonumber\\
    k         & \geq & \sqrt[3]{\log{(n)}/4}.       \nonumber
  \end{eqnarray}
\end{proof}


Given the result in Theorem~\ref{lower} that no constant number of
trees can collectively 1-span a unit interval graph, it is somewhat
surprising that there is a sparse 1-spanner of an interval graph that
has fewer than $2n - 2$ edges (i.e.,  the number of edges in two
disjoint spanning trees).  In particular, the following theorem is
proved in \cite{CorneilDKY05}.

\begin{theorem}
  Any interval graph $G$ of diameter $D$ admits a sparse additive
  1-spanner with at most $2n - D - 2$ edges.  Moreover, this spanner
  can be constructed in $O(n+m)$ time.
\end{theorem}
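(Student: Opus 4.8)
The plan is to fix an interval representation of $G$ with all endpoints distinct, write $l(v)<r(v)$ for the endpoints of the interval assigned to $v$, and build $H$ as the union of two greedy ``furthest--reach'' spanning trees. For every vertex $v$ other than the vertex $z^\ast$ of globally largest right endpoint, let $\rho(v)$ be the neighbour of $v$ with the largest right endpoint, and for every vertex other than the vertex $w^\ast$ of globally smallest left endpoint, let $\ell(v)$ be the neighbour with the smallest left endpoint. A first observation I would establish is that, because $G$ is connected, the only ``right--local maximum'' is $z^\ast$ itself: a vertex whose closed neighbourhood reaches no further right than it does would create a gap on the line and disconnect $G$. Hence $r(\rho(v))>r(v)$ for every $v\neq z^\ast$, the map $\rho$ is acyclic, and $T_R:=\{\{v,\rho(v)\}\}$ is a spanning tree rooted at $z^\ast$. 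Symmetrically $T_L:=\{\{v,\ell(v)\}\}$ is a spanning tree rooted at $w^\ast$, and I set $H:=T_R\cup T_L$.

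The correctness rests on the classical greedy fact, which I would prove via the above monotonicity: iterating $\rho$ out of any vertex $u$ attains after $i$ steps the \emph{largest} right endpoint reachable from $u$ by a walk of length $i$, and the resulting vertices form a genuine path in $T_R$; the mirror statement holds for $\ell$ in $T_L$. Consequently, for $v$ to the right of $u$, $\dist_G(u,v)$ is exactly the first step at which the iterated right--reach of $u$ covers $l(v)$. To bound $\dist_H(u,v)$ I would advance the $\rho$--walk out of $u$ and the $\ell$--walk out of $v$ simultaneously: reach--optimality forces their two frontiers to become adjacent after a combined $\dist_G(u,v)-1$ steps, after which a single further pointer edge closes the gap. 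Turning ``the frontiers become adjacent and the last hop costs at most one'' into a rigorous bound $\dist_H(u,v)\le \dist_G(u,v)+1$ is the main obstacle: the delicate point is the \emph{landing}, since the edge witnessing adjacency of the two frontiers need not itself lie in $H$, and one must instead reach the target through its own pointer. I expect this to come down to a short case analysis according to whether the final frontier vertex is already the target, its pointer, or a common neighbour, using crucially that $H$ contains, for the target, the edge onto the longest interval in its neighbourhood.

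For the edge count, since $|T_R|=|T_L|=n-1$ we have $|E(H)|=2(n-1)-|T_R\cap T_L|$, so it suffices to show that at least $D$ edges are used by \emph{both} trees, i.e.\ that there are at least $D$ edges $\{x,y\}$ with $\rho(x)=y$ and $\ell(y)=x$. This is the secondary obstacle. The intuition is that along a furthest--reach geodesic realizing the diameter the pointer choices are forced to agree, so that each of its $D$ edges is doubly used; some care is needed because the global extremes $w^\ast,z^\ast$ may coincide or fail to be the ends of a diametral pair (as already happens for a star), so the $D$ common edges must be located by a careful accounting rather than read off a single $w^\ast$--$z^\ast$ path. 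Establishing $|T_R\cap T_L|\ge D$ then yields $|E(H)|\le 2n-D-2$.

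Finally, an interval model with distinct endpoints, the two pointer maps $\rho,\ell$, the roots $z^\ast,w^\ast$, and hence $H$ can all be produced in $O(n+m)$ time by the standard linear interval--graph toolkit (e.g.\ LexBFS plus a sweep over the sorted endpoints), which gives the claimed construction time.
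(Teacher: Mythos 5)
First, a point of reference: the paper does not prove this theorem at all --- it states it and explicitly defers the proof to \cite{CorneilDKY05} --- so your argument has to stand entirely on its own. It does not, and the problem is not the missing details but the construction itself: $H=T_R\cup T_L$ is \emph{not} an additive 1-spanner of an interval graph in general. The ``landing'' issue you flagged as the delicate point is fatal, not technical. Concretely, take the six intervals $s'=[2,12]$, $s=[5,16]$, $u=[10,20]$, $v=[15,25]$, $w=[18,40]$, $w'=[22,50]$. Then $u$ and $v$ are adjacent in $G$, but the pointers are $\rho(u)=w$, $\ell(u)=s'$, $\rho(v)=w'$, $\ell(v)=s$; moreover the only vertices whose pointers land on $u$ are $s'$ (via $\rho$) and $w$ (via $\ell$), and the only ones landing on $v$ are $s$ (via $\rho$) and $w'$ (via $\ell$). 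Hence $N_H(u)=\{s',w\}$ and $N_H(v)=\{s,w'\}$ are disjoint and the edge $uv$ lies in neither tree, so $\dist_H(u,v)=3=\dist_G(u,v)+2$. The underlying phenomenon is that both greedy trees can simultaneously route \emph{around} an edge $uv$: the right-pointers of both endpoints overshoot past $v$ (here $r(w),r(w')>r(v)$) while the left-pointers of both endpoints undershoot past $u$, and nothing ties the two frontiers together by an edge of $H$. Your key step --- ``reach-optimality forces the two frontiers to become adjacent after $d-1$ combined steps, after which a single further pointer edge closes the gap'' --- only yields adjacency in $G$, and the example shows the final hop genuinely costs $2$ in $H$. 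No case analysis can repair this, because the statement you would be proving about this particular $H$ is false.

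Two further remarks. The secondary claim $|T_R\cap T_L|\ge D$ is also left unproved (it happens to hold in the example above, but you give no argument and the ``diametral geodesic forces agreement'' intuition fails precisely because pointer choices need not follow any particular geodesic). More importantly, the arithmetic of the bound $2n-D-2 = D + 2(n-D-1)$ itself suggests the shape of a correct construction: a shortest-path \emph{backbone} of $D$ edges plus at most two carefully chosen edges attaching each off-backbone vertex, with the attachments coordinated with the backbone so that routing along it incurs surplus at most $1$. A union of two independently grown pointer trees lacks exactly this coordination; what it is known to give (and what your example is consistent with) is an additive \emph{2}-spanner, which is the weaker guarantee already available for interval and AT-free graphs by other means.
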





\section{The ``Master'' Theorem for Lower Bounds on Collective Tree
  Spanners}\label{master}

We now turn our attention to collective additive spanners on many
other classes of graphs beyond unit interval graphs.  Our main
contribution is a powerful ``master'' theorem that proves the various
lower bounds, by introducing the notion of a \emph{gadget} and then
building graphs that are a \emph{tree of gadgets}.  Each gadget has a
root $r$ and terminals, $t_1, t_2, \cdots, t_j, j \ge 2$.  The tree is
formed by taking one gadget as the ``root gadget'' and then building
the tree by identifying the root node of a child gadget with a
terminal node of its parent.  In this way, we form a complete $j$-ary
tree of gadgets and consider such graphs to be sufficiently large to
establish our lower bounds.  Furthermore, when constructing graph $H$
from gadget $G$, we say that $H$ has \emph{depth} $1$ if $G \equiv H$
and has depth $k \: (k>1)$ if all terminal nodes of the root gadget
are the roots of a graph of depth $k-1$.  The terminals of the gadgets
at depth $k$ are called \emph{$k-$leaves of $H$}.

A spanning tree $T$ of a gadget $G$ is called a \emph{Fast Connecting
  (FC) tree of $G$} if $\dist_T(r,t_i) =
  \dist_G(r,t_i), \: \forall \, 1 \le i \le j$.  Similarly, a spanning
tree $T$ of graph $H$ is called a \emph{Fast Connecting (FC) tree of
  $H$} if the restriction of $T$ to each copy of $G$ in $H$ is an FC
tree of $G$.  For a given $c$, a gadget $G$ is called a
\emph{$d$-gadget} if no $d$ FC trees provide a collection of trees
that collectively $+c$ spans $G$.  In particular, we
require at least one pair of terminals $t_1, t_2$ to witness the fact
that $G$ is not collectively $+c$ spanned (i.e.,
$\dist_T(t_1,t_2) > \dist_G(t_1,t_2) +c, \: \forall
  \, T \in \cal{T}$ where $\cal{T}$ is any arbitrary collection of $d$ FC
trees of $G$).  If the gadget $G$ cannot be collectively $+c$ spanned
by \emph{any} set of FC trees, we refer to the gadget as an
\emph{$\infty$-gadget}.

For example, consider the \emph{house} (see Fig.~\ref{fig:sub:house})
to be the gadget $G$, where we are looking for a collection of trees
that $+2$ spans $G$.  We let $r$ be the \emph{roof of the house},
namely the vertex of degree $2$ that is in the triangle.  The
neighbors of $r$ are labeled $1,2$ and the terminal $t_1$ is the new
neighbor of $1$ and $t_2$ is the new neighbor of $2$.  The only FC
tree is the $P_5: t_1 - 1 - r - 2 - t_2$ and it does not $+2$ span the
house since $\dist_G(t_1,t_2) = 1$ whereas
$\dist_T(t_1,t_2) = 4$.  Thus
for $c=2$, the house is an $\infty$-gadget.

\begin{theorem}\label{main}[The ``Master'' Theorem]
  For a given $c \ge 2$, if $G$ is a $d$-gadget for $d \ge 2$, then no
  $d$ trees can collectively $+c$ span a sufficiently large $H$
  constructed from $G$.
\end{theorem}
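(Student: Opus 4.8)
\emph{Proposed proof strategy.} The plan is to assume, toward a contradiction, that some collection $\{T_1,\dots,T_d\}$ of $d$ spanning trees of $H$ collectively $+c$-spans $H$, and then to \emph{localize} this assumption to a single copy of the gadget $G$, where the $d$-gadget hypothesis can be invoked.

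First I would record the block structure of $H$. Because each child gadget is attached to its parent by identifying its root with a parent terminal, every copy $G'$ of $G$ meets the rest of $H$ only in its root and its terminals, each of which is a cut vertex of $H$ (distinct cut vertices of $G'$ lie in distinct components of $H$ once the interior of $G'$ is removed). Two consequences drive everything. (i) For vertices $a,b$ in a common copy $G'$ we have $\dist_H(a,b)=\dist_{G'}(a,b)=\dist_G(a,b)$, since any shortest $a$--$b$ path that left $G'$ would have to re-enter through the interior of $G'$ anyway. (ii) For the same reason the unique $T_\ell$-path between $a,b\in G'$ never leaves $G'$, so the edges of $T_\ell$ inside $G'$ form a spanning tree $T_\ell|_{G'}$ of $G'$ with $\dist_{T_\ell}(a,b)=\dist_{T_\ell|_{G'}}(a,b)$. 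Thus the global spanning condition is equivalent to a family of conditions read off copy by copy.

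Next I would show that it suffices to exhibit a single copy $G'$ on which \emph{every} restriction $T_\ell|_{G'}$ is an FC tree of $G$. Granting such a copy, $\{T_1|_{G'},\dots,T_d|_{G'}\}$ is a collection of $d$ FC trees of $G$, so the $d$-gadget property supplies a terminal pair $t_1,t_2$ of $G'$ with $\dist_{T_\ell|_{G'}}(t_1,t_2)>\dist_G(t_1,t_2)+c$ for all $\ell$. By (i)--(ii) this reads exactly $\dist_{T_\ell}(t_1,t_2)>\dist_H(t_1,t_2)+c$ for all $\ell$, so the pair $(t_1,t_2)$ of $H$ is $+c$-spanned by none of the trees --- the desired contradiction. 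The theorem thus reduces to producing an \emph{all-FC copy} inside a sufficiently large $H$.

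Finally, to locate such a copy I would take $H$ to be the complete $j$-ary tree of gadgets of a large depth $D$ and run a pigeonhole over restriction types, exactly as in the $\Omega(\sqrt[3]{\log n})$ argument: since $G$ is fixed, Cayley's formula bounds the number of spanning trees of $G$ by $N:=|V(G)|^{|V(G)|-2}$, so each copy carries one of at most $N^d$ types (the $d$-tuple of labeled restrictions), and applying the type-matching pigeonhole $d$ times down a root-to-leaf gadget path isolates arbitrarily long stacks of copies all carrying one common type. The remaining, and genuinely hard, step is the \emph{FC-forcing}: showing this common type must be FC for every tree. Here I would use additivity of stretch along a path of cut vertices --- for a leaf $v$ reached through such a stack, $\dist_{T_\ell}(r,v)-\dist_H(r,v)$ is a sum of nonnegative per-copy root-to-descending-terminal stretches, each at least $1$ whenever the restriction is non-FC in that direction --- so that $+c$-spanning the pair $(r,v)$ forces some $T_\ell$ to be FC on all but at most $c$ copies of the stack. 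Stacking more than $c$ identical copies and repeating over the $j$ descending directions and the $d$ trees then pins the common type to be FC for every $T_\ell$ on one copy, and the previous paragraph closes the argument. The main obstacle is precisely this FC-forcing: a single tree may legitimately fail to be FC on any prescribed copy, so no bound on $|H|$ alone yields an all-FC copy; what rescues the argument is that persistent non-FC behaviour accumulates root-to-terminal stretch additively and is thereby capped by the surplus $c$ on every spanned root--leaf pair. Converting this cap, together with the branching of the $j$-ary gadget tree and the iterated type-pigeonhole, into simultaneous FC-ness of all $d$ trees on one copy is the crux, and it is what fixes the explicit (tower-type) depth $D=D(N,d,c,j)$, hence the size of $H$, mirroring the explicit count in the $\Omega(\sqrt[3]{\log n})$ theorem.
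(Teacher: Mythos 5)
Your localization is sound and coincides with the paper's setup: each copy of $G$ meets the rest of $H$ only at cut vertices, so shortest paths and tree paths between vertices of a copy stay inside that copy, each $T_\ell$ restricts to a spanning tree of the copy with distances preserved, and a copy on which all $d$ restrictions are FC would contradict the $d$-gadget property. Likewise, your jog-accumulation identity (the stretch of a root-to-leaf pair in $T_\ell$ is a sum of nonnegative per-copy root-to-terminal stretches) is exactly the engine of the paper's proof.

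The genuine gap is the FC-forcing step, which you yourself flag as the crux but never actually prove, and the sketch you give for it fails on a quantifier reversal. From spanning the pairs $(r,v)$ along constant-direction stacks of same-type copies you can conclude only: for every descent direction $i$ there is \emph{some} tree that is jog-free in direction $i$ on the common type. What you need is the opposite quantifier order: one copy on which \emph{every} tree is jog-free in \emph{every} direction. Nothing in the sketch bridges this. Concretely, suppose half of the trees have a common restriction type that jogs precisely in direction $1$ and the other half precisely in direction $2$ (possible for any $d,j\ge 2$): every constant-direction root-to-leaf pair is then spanned exactly (by the trees of the other group), so all the constraints your argument extracts are satisfied, yet no restriction is FC anywhere, and hence those constraints cannot logically imply the existence of an all-FC copy. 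Such a configuration fails to span $H$ only on pairs whose connecting path descends through \emph{different} terminals---e.g.\ leaves below the two witness terminals $t_1,t_2$ of the $d$-gadget---and your proposal has no mechanism exploiting such mixed pairs. This is precisely what the paper's key lemma does instead: it never attempts to produce an all-FC copy (under the spanning assumption none can exist, by your own reduction), but argues by induction on the number $k$ of non-FC trees, showing that jogs accumulate along descending paths until every non-FC tree has at least $c$ jogs toward $t_1$ and is therefore useless for connecting $H_z$ with $H_{t_2}$, while the FC trees are already excluded by the gadget's witness pair; this bounds the depth by $C_k = k\,c\,C_{k-1}+1$, and taking $H$ of depth greater than $C_d$ finishes. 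Repairing your argument would amount to reconstructing that induction; the type pigeonhole and constant-direction forcing alone cannot deliver it. (Two minor further points: the same-type copies produced by pigeonhole along a path need not be consecutive, and the resulting depth bound is on the order of $d!\,c^{d}$, not tower-type.)
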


\begin{proof}
  The following lemma is key to the proof:

  \begin{lemma}\label{lem:FC-tree-lemma}
    Let $\mathcal{T}$ be a set of $d$ trees that collectively
    spans $H$ of depth $D$.  If all but $k$ trees in
    $\mathcal{T}$ ($1 \leq k \leq d$) are FC trees in $H$ then there
    is a constant $C_k$ such that $D \ngtr C_k$.
  \end{lemma}

  \noindent
  Note: If $d= \infty$, then the lemma says that no constant sized
  collection of trees can $+c$ spans a sufficiently
  large $H$.

  \medskip{}

  \begin{proof}
    For the base case (i.e.~$k=1$) let $T$ be the tree in
    $\mathcal{T}$ that is not an FC tree of $H$.
    We now show that the restriction of $T$ in $H$ must cause every
    copy of $G$ to have at least one root-terminal pair where their
    distance in $T$ is at least one more than their distance in $G$.
    Suppose not, and there exists $G' \in H$ such that $T$ is FC in
    $G'$.  Since $\mathcal{T}$ is a collective $+c$ spanner of $H$,
    $\mathcal{T}$ restricted to $G'$ collectively $+c$ spans $G'$, which
    contradicts $G$ being a $d$-gadget since $G'$ has a collection of
    $d$ FC trees that $+c$ spans $G'$.  For any copy of $G$ in $H$, we
    let $t'$ denote such a terminal and we say that $T$ has a
    \emph{jog} between the root of $G$ and $t'$.

    Now let $t_1$, $t_2$ be terminals of the root gadget of $H$ such
    that none of the FC trees of $H$ in $\mathcal{T}$ $+c$
    spans the pair $t_1$, $t_2$.  Thus only $T$ can
    provide a sufficiently short path between descendants of $t_1$ and
    $t_2$.  Let $a$ (respectively $b$) be a $D$-leaf of $H$ in the
    subgraph of $H$ rooted at $t_1$ (respectively $t_2$) such that the
    path between $t_1$ and $a$ (resp.\ between $t_2$ and $b$) jogs in
    each of the gadgets in the path.  The distance in $T$ between $a$
    and $b$ is at least $2 (D-1)$ greater than their distance in $H$.
    Thus $D \ngtr \lceil \frac{c+2}{2} \rceil$ (i.e., $C_1 = \lceil
    \frac{c+2}{2} \rceil$) as required.

    \medskip{}

    Let us consider the case $k=2$ and let $H$ be a graph of gadgets
    as constructed above, further let $T_1$, $T_2$ be the only non-FC
    trees of $\mathcal{T}$.  By the same argument as above in each of
    the gadgets at least one of $T_1$ and $T_2$ has to be non-FC.  Now
    consider the subgraph $H_{t_1}$ of $H$, that is rooted on vertex
    $t_1$, a terminal vertex of the root gadget.  If $T_1$ is an FC
    tree for all the gadgets in $H_{t_1}$, then, by induction,
    $H_{t_1}$ can not have depth larger than $C_1$.  By the
    construction of $H$ this implies that $D \leq C_1 +1$, which
    proves the lemma.  Thus we can assume that there is some gadget in
    $H_{t_1}$ at level $\ell_1 \le C_1+1$ such that $T_1$ is not FC in
    this gadget.  Let $x$ be the terminal of this gadget that has a
    jog on its $T_1$ path to $t_1$.  Now consider the graph $H_x$
    that is rooted at vertex $x$.  By the same
    argument again, there has to be some gadget in $H_x$ at level
    $\ell_2 \leq 2\cdot C_1 +1$ such that $T_1$ is not FC in this
    gadget.  Applying this argument repeatedly shows that there is a
    vertex $z$ in level $\ell_c \leq c\cdot C_1 + 1$ such that there
    are at least $c$ jogs on the $T_1$ path from $z$ to $t_1$.  Hence
    this path is useless for any pair of vertices $u,v$ where $u$ is
    contained in $H_z$ and $v$ in $H_{t_2}$.  Now we apply the same
    kind of argument in the graph $H_z$ for the tree $T_2$ showing
    that there is a vertex $w$ in level $\ell_{2 \cdot c} \leq 2 \cdot
    c \cdot C_1 +1$ such that any path from $w$ to $t_1$ both in $T_1$
    and in $T_2$ has at least $c$ jogs, implying that $\mathcal{T}$
    can not $+c$ span $H$ if the depth of $H$ is larger than $C_2 := 2
    \cdot c \cdot C_1 +1$.

    \medskip{}

    Now consider case $k$ and suppose for all values $1 \leq s < k$
    the claim holds, i.e., there is such a constant $C_s$ that bounds
    the depth of the graph $H$.  Similarly as in the case $k=2$ we can
    argue that there is a gadget on level $\ell_1 \leq C_{k-1} +1$ for
    which $T_k$ is not an FC-tree and a terminal vertex $x$
    in this gadget such that there is a jog on the $T_k$ path from $x$
    to $t_1$.  Applying this argument repeatedly shows that there is a
    vertex $y$ on level $\ell_c \leq c\cdot C_{k-1} +1$ such that the
    $T_k$ path from $y$ to $t_1$ contains at least $c$ jogs, implying
    that $T_k$ is useless for any pair of vertices $u,v$ with $u$ in
    $H_y$ and $v$ in $H_{t_2}$.  Now we apply the same argument for
    all the other $k-1$ non-FC trees in $\mathcal{T}$ repeatedly,
    showing that there is a terminal vertex $z$ on level
    $\ell_{k \cdot c} \leq k \cdot c \cdot C_{k-1} + 1$ such that in
    each of the trees $T_1, \dots, T_k$ the $z, t_1$ path contains at
    least $c$ jogs implying that these trees are useless for
    connecting vertices from $H_z$ and $H_{t_2}$.  It follows that the
    depth of $H$ can not be larger than $C_k := k \cdot c \cdot
    C_{k-1} + 1$.
  \end{proof}

  By choosing $H$ to be of depth at least $C_d$ and applying the lemma
  we complete the proof of the theorem.
\end{proof}

As an example of the results that can be achieved as a corollary of
the ``master'' theorem consider:

\begin{corollary}\label{house}
  No constant sized set of spanning trees collectively $+2$-spans
  weakly chordal graphs or outerplanar graphs.
\end{corollary}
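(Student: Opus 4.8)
The plan is to apply the ``Master'' Theorem (Theorem~\ref{main}) directly. To do so, I must exhibit, for each of the two graph classes (weakly chordal and outerplanar), a gadget $G$ that is an $\infty$-gadget for $c=2$ and such that the resulting tree-of-gadgets $H$ stays inside the class. The excerpt has already handed me the key gadget: the \emph{house} is an $\infty$-gadget for $c=2$, as worked out in the example preceding the theorem (its unique FC tree is the path $P_5$, which stretches the terminal pair $t_1,t_2$ from distance $1$ to distance $4$, so \emph{no} set of FC trees can $+2$ span it). Since an $\infty$-gadget is in particular a $d$-gadget for every $d\ge 2$, Theorem~\ref{main} immediately yields that no constant-sized set of spanning trees $+2$-spans a sufficiently large $H$ built from the house. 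The only remaining work is class membership.

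\medskip

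First I would handle the \textbf{weakly chordal} case. Here I would verify that the house itself is weakly chordal---it contains no induced hole (cycle of length $\ge 5$) and its complement is small and easily checked to be hole-free---and, more importantly, that the tree-of-gadgets $H$ obtained by gluing houses root-to-terminal remains weakly chordal. I would argue that because gadgets are joined only by identifying a single cut vertex (a terminal of the parent with the root of the child), any induced cycle of $H$ must live inside a single copy of the house: a cut vertex cannot be an internal vertex of an induced cycle, so no induced cycle can straddle two gadgets. Hence $H$ has no induced hole; the same cut-vertex / block-decomposition reasoning applied to $\overline{H}$ (or a direct check that long induced paths in $\overline{H}$ cannot close up into holes) shows $\overline{H}$ is hole-free as well, so $H$ is weakly chordal.

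\medskip

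For the \textbf{outerplanar} case I would instead choose the gadget to be a graph that is itself outerplanar and remains an $\infty$-gadget for $c=2$; the house is planar and outerplanar (it is a triangle with a square sharing an edge, drawable with all vertices on the outer face), so I would reuse it, after confirming outerplanarity of the assembled $H$. The crucial structural fact is that outerplanarity is preserved under gluing two outerplanar graphs at a single vertex (a one-vertex amalgam of outerplanar graphs is outerplanar, since one can place each piece in a disjoint region sharing only the glue vertex on the outer face). Iterating this over the complete $j$-ary tree of gadgets keeps every vertex on the unbounded face, so $H$ is outerplanar. With membership in hand for both classes, the corollary follows by invoking Theorem~\ref{main} with the house as an $\infty$-gadget.

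\medskip

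\textbf{The hard part} will be the class-membership verifications, and in particular getting the gadget \emph{right} so that both the $\infty$-gadget property and the closure-under-gluing property hold simultaneously. The $\infty$-gadget argument needs the root and two terminals positioned so that any FC tree is forced into a detour between the terminals; the house achieves this, but I must be careful that the root/terminal labeling used in the tree-of-gadgets construction (identifying a child's root with a parent's terminal) does not create new short cuts between terminals of \emph{different} gadgets that would rescue a collective spanner---this is exactly what the cut-vertex argument rules out, since distances between vertices in different gadgets are forced to pass through the shared cut vertices. I would therefore spend most of the care confirming that gluing at cut vertices neither destroys outerplanarity / weak chordality nor introduces shortcuts that invalidate the gadget's witnessing pair, after which the combinatorial heavy lifting is fully absorbed by Theorem~\ref{main}.
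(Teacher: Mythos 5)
Your proposal is correct and takes essentially the same route as the paper: the paper's proof is exactly the observation that the house is an $\infty$-gadget for $c=2$ followed by an invocation of Theorem~\ref{main}. The class-membership verifications you elaborate (weak chordality and outerplanarity of the glued tree-of-houses $H$, via block/cut-vertex arguments) are left implicit in the paper, and your reasoning for them is sound.
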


\begin{proof}
  As we have seen, for $c=2$ the house is an $\infty$-gadget, and thus
  the corollary follows from Theorem \ref{main}.
\end{proof}

As we will show in the next section, this corollary can be
strengthened to hold for any $c \ge 2$.

\section{Lower Bounds on Collective Tree Spanners for Chord\-al and
  Weakly Chordal Graphs}\label{sec:coll-tree-spann}

In this section we construct gadgets to show that for various values
of $c \ge 2$, there are chordal (or weakly chordal)
graphs that cannot have a constant
sized set of spanning trees that collectively $+c$
spans those graphs.  First we strengthen Corollary~\ref{house}.


\begin{theorem}
  For every $c > 2$, no constant number of spanning trees can
  collectively $+c$-span weakly chordal graphs (or outerplanar
  graphs).
\end{theorem}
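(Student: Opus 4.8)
The plan is to apply the ``Master'' Theorem (Theorem~\ref{main}): it suffices to produce, for each fixed $c>2$, a single \emph{$\infty$-gadget} for surplus $c$ that is simultaneously weakly chordal and outerplanar, and then to check that the tree-of-gadgets $H$ built from it stays in both classes. The gadget I would use is a ``long house'' generalizing the house of Corollary~\ref{house}. Fix an integer $m$ with $2m-1>c$. Take two \emph{rails} of length $m$ emanating from a common top vertex $r$ (the root): a left rail $r-a_1-a_2-\cdots-a_{m-1}-t_1$ and a right rail $r-b_1-\cdots-b_{m-1}-t_2$, with the two terminals $t_1,t_2$ at the bottom. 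Add the \emph{rungs} $a_ib_i$ for $1\le i\le m-1$ together with the bottom edge $t_1t_2$, so that $r$ lies in the triangle $ra_1b_1$ and the vertices below $r$ form a $2\times m$ ladder.

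The three properties I would then verify are the following. (i) \emph{The gadget is weakly chordal and outerplanar.} Because every rung joins two rail vertices at the same depth from $r$, every induced cycle is either the top triangle $ra_1b_1$ or one of the $4$-cycles $a_ia_{i+1}b_{i+1}b_i$; hence there is no induced hole, and since the gadget contains only a single triangle it can contain no induced antihole $\overline{C_k}$ ($k\ge 6$), each of which contains two vertex-disjoint triangles. So the gadget is weakly chordal (and, tellingly, \emph{not} chordal, consistent with the chordal case being open for large $c$ in Table~\ref{tbl:summary}). For outerplanarity I would place the vertices on a circle in the cyclic order $r,a_1,\dots,a_{m-1},t_1,t_2,b_{m-1},\dots,b_1$; each rung $a_ib_i$ then joins position $i$ to position $2m+1-i$, and these chords are nested, hence non-crossing. (ii) \emph{The gadget has a unique FC tree, of large surplus.} Since each rail carries only its path edges, reaching $t_1$ (resp.\ $t_2$) from $r$ requires $m$ depth-decreasing steps, and any route that switches rails through a rung costs an extra step; thus the left rail is the unique shortest $r$--$t_1$ path and the right rail the unique shortest $r$--$t_2$ path. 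Every FC tree must therefore contain both rails, and these $2m$ edges already span the $2m+1$ vertices, so the \emph{only} FC tree is the path $t_1-a_{m-1}-\cdots-r-\cdots-b_{m-1}-t_2$, in which $\dist_T(t_1,t_2)=2m>1+c=\dist_G(t_1,t_2)+c$. Hence the terminal pair $t_1,t_2$ is $+c$-spanned by no FC tree and the gadget is an $\infty$-gadget. (iii) \emph{Closure under gluing.} The graph $H$ is obtained by identifying the root of each child gadget with a terminal of its parent, i.e.\ by $1$-sums at cut vertices; since each gadget is $2$-connected (for $m\ge 2$), the blocks of $H$ are exactly the gadget copies, and every induced hole or antihole is $2$-connected and so lies inside one block, leaving $H$ weakly chordal, while outerplanarity is preserved under $1$-sums. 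Given (i)--(iii), Theorem~\ref{main} in its $d=\infty$ form yields that no constant number of trees can collectively $+c$-span a sufficiently large such $H$, proving the theorem.

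The step I expect to be the main obstacle is part (i) together with the distance argument of (ii): the rungs must destroy \emph{every} long induced cycle of the underlying $(2m+1)$-cycle --- omitting even one rung $a_jb_j$ resurrects an induced $6$-cycle and breaks weak chordality --- while at the same time introducing no shortcut that lowers $\dist_G(r,t_i)$ below $m$ or lets an FC tree route $t_1$ to $t_2$ more cheaply. What reconciles the two demands is precisely that each rung connects vertices equidistant from $r$: this keeps the two rails as the unique shortest root--terminal paths (so the $2m$-long detour is forced) yet still cuts the polygon down to $4$-cycles and one triangle (so the graph is weakly chordal). This is also exactly why the gadget cannot be made chordal without collapsing the construction, which is why the statement is confined to weakly chordal (and outerplanar) graphs.
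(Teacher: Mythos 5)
Your proposal is correct and is essentially the paper's own proof: your ``long house'' with rails of length $m$ (where $2m-1>c$) is exactly the paper's $\ell$-house gadget (a chain of $C_4$s capped by a triangle, with $\ell=\lceil c/2\rceil$), used as an $\infty$-gadget whose unique FC tree forces $\dist_T(t_1,t_2)=2m>\dist_G(t_1,t_2)+c$, followed by the same appeal to the Master Theorem. Your explicit verifications of weak chordality, outerplanarity, and closure under the root--terminal identifications are details the paper leaves implicit, and they check out.
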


\begin{proof}
  Consider a $C_4$ $1-2-3-4-1$.  By identifying the $1,2$ vertices of
  one $C_4$ with the $3,4$ vertices of a new $C_4$ we can build a
  chain of $C_4$s such that all vertices except the $1,2$ vertices of
  the first $C_4$ and the $3,4$ vertices of the last $C_4$ have degree
  $3$.  By adding a vertex of degree $2$ adjacent to the $1,2$
  vertices of the first $C_4$ in a chain of $\ell$ $C_4$s, we get an
  \emph{$\ell$-house}.  (Note that a $1$-house is just the house
  mentioned previously.)

  The gadget we use for $c \ge 2$ is a $\lceil \frac{c}{2}
  \rceil$-house with the root being the degree $2$ vertex of the
  triangle and the terminals $t_1,t_2$ being the other degree $2$
  vertices (i.e., the $3,4$ vertices of the last $C_4$).  We note that
  this gadget $G$ is an $\infty$-gadget and that in an FC tree $T$ of
  $G$, $\dist_T(t_1,t_2) = 2 (\lceil \frac{c}{2} \rceil+1)$ whereas
  $\dist_G(t_1,t_2) = 1$.  Now
  using this gadget in Theorem~\ref{main} finishes the proof.
\end{proof}

Note that this result on outerplanar graphs shows that there is a big difference between the collective additive tree $c$-spanners and the low-stretch tree covers.  While, for every $c > 2$, no constant number of spanning trees can collectively $+c$-span outerplanar
graphs, it is known \cite{10353154} that a constant number of trees is sufficient to provide a constant-stretch tree cover even for all minor-free metrics.    

We now turn our attention to chordal graphs.

\begin{theorem}\label{chordal2}
  No constant number of spanning trees can collectively
  $+2$-span chordal graphs.
\end{theorem}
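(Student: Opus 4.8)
The plan is to apply the Master Theorem (Theorem~\ref{main}) with $c=2$, so it suffices to exhibit a chordal $\infty$-gadget, i.e.\ a chordal gadget $G$ with a root $r$ and terminals such that \emph{no} collection of FC trees can collectively $+2$-span $G$. Recall that the house used for Corollary~\ref{house} is an $\infty$-gadget for $c=2$, but the house is not chordal (it contains an induced $C_4$). So the main task is to replace the house with a \emph{chordal} construction that still forces every FC tree to stretch some terminal pair by more than $2$. First I would look for the chordal analogue of the house: a small chordal graph in which the root $r$ reaches its terminals along shortest paths, yet any spanning tree respecting those shortest-path distances (FC tree) is forced to route the path between two terminals $t_1,t_2$ very far, even though $t_1,t_2$ are close in $G$.

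The natural candidate is to start from a ``fan''-type chordal graph: take two terminals $t_1,t_2$ that are adjacent (or at distance $1$) in $G$, together with a shortest-path structure from the root that makes the two $r$--$t_i$ shortest paths internally disjoint and \emph{long}. The chordality requirement forces us to triangulate every $4$-cycle, so instead of a house I would use a construction like a triangulated strip (a sequence of triangles sharing edges, a ``triangular snake'') anchored at $r$, with $t_1$ and $t_2$ at the far end made adjacent. The key gadget property to verify is: $\dist_G(t_1,t_2)=1$ (or small), while in every FC tree the unique tree path from $t_1$ to $t_2$ must traverse back toward $r$ and out again, giving $\dist_T(t_1,t_2)\ge 3+\dist_G(t_1,t_2)$ regardless of which FC tree we pick. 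Because an FC tree is pinned down to realize $\dist_T(r,t_i)=\dist_G(r,t_i)$ for \emph{every} terminal, the tree cannot simultaneously keep all terminals short and shortcut $t_1t_2$; I would argue, by a short case analysis on how the FC tree attaches the two terminal branches, that no FC tree can place $t_1,t_2$ within surplus $2$.

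The steps in order are: (1) describe the chordal gadget $G$ precisely (vertex set, the triangulation witnessing chordality, the designated root $r$ and the $j\ge 2$ terminals), and confirm $G$ is chordal; (2) compute $\dist_G(r,t_i)$ and the short distance $\dist_G(t_1,t_2)$ between the witness pair; (3) characterize the FC trees of $G$ --- i.e.\ identify which edges an FC tree is forced to use to realize all root-to-terminal shortest paths --- and show that in \emph{every} such tree $\dist_T(t_1,t_2)>\dist_G(t_1,t_2)+2$; and (4) invoke Theorem~\ref{main}, noting that a sufficiently large tree-of-gadgets $H$ built from $G$ is still chordal (identifying a terminal of a parent with the root of a child preserves chordality since no new cycle is created across the gluing vertex), so no constant number of trees can collectively $+2$-span~$H$.

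The hard part will be step (3): pinning down the FC-tree structure and proving the $+2$ stretch holds for \emph{all} FC trees rather than a single distinguished one, since the $\infty$-gadget condition is universally quantified over FC trees. I expect the cleanest route is to show that realizing $\dist_T(r,t_1)=\dist_G(r,t_1)$ and $\dist_T(r,t_2)=\dist_G(r,t_2)$ forces the two terminal branches to meet only near $r$, so the $t_1$--$t_2$ tree path has length essentially $\dist_G(r,t_1)+\dist_G(r,t_2)$, which I design to exceed $\dist_G(t_1,t_2)+2$. A secondary subtlety is verifying chordality of the glued graph $H$ and making sure the ``surprising'' short edge $t_1t_2$ (and any triangulating chords) does not accidentally create a better $r$--$t_i$ path that would loosen the FC constraint; I would check this directly in the small gadget.
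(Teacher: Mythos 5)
Your overall strategy---apply Theorem~\ref{main} after exhibiting a single chordal \emph{$\infty$-gadget} whose witness pair $t_1,t_2$ is adjacent, equidistant from $r$, and far from the root---cannot be carried out, and this is precisely the obstruction the paper flags when it says that no appropriate $\infty$-gadget for chordal graphs is known. The reason is a structural property of chordal graphs: if $t_1t_2$ is an edge and $\dist_G(r,t_1)=\dist_G(r,t_2)=D$, then $t_1$ and $t_2$ have a \emph{common neighbor} $z$ with $\dist_G(r,z)=D-1$. (Sketch: take shortest $r$--$t_1$ and $r$--$t_2$ paths chosen to agree on a longest possible suffix; together with the edge $t_1t_2$ they contain a cycle, and since chords can only join vertices whose levels differ by at most one, chordality forces a level-$(D-1)$ vertex adjacent to both $t_1$ and $t_2$, otherwise an induced cycle of length at least $4$ survives.) Given such a $z$, any BFS tree of $G$ rooted at $r$ in which $z$ is chosen as the parent of both $t_1$ and $t_2$ is an FC tree with $\dist_T(t_1,t_2)=2=\dist_G(t_1,t_2)+1$. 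So the universally quantified claim in your step (3)---that \emph{every} FC tree stretches $t_1,t_2$ by more than $2$---is false for any chordal gadget with an adjacent equidistant witness pair, no matter how the triangulated strip is arranged. This is exactly why the house works for Corollary~\ref{house} (its $C_4$ blocks the common neighbor) and why triangulating the house to restore chordality destroys the gadget property; your ``secondary subtlety'' about chords creating better $r$--$t_i$ paths is not the real danger---the chords kill the gadget at the bottom, not at the root.

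The missing idea is to give up on $\infty$-gadgets and make the gadget depend on the number of trees: the paper constructs, for each constant $k$, a chordal \emph{$k$-gadget} $G_k$ with \emph{many} witness pairs and beats $k$ trees by pigeonhole. Concretely, $G_k$ is a $K_{k+2}$ on $\{r,1,\dots,k+1\}$, plus $t_0$ adjacent to $1,\dots,k+1$, plus $t_i$ adjacent to $t_0$ and $i$ for $1\le i\le k+1$; the terminals are $t_0,t_1,\dots,t_{k+1}$. In any FC tree the path from $t_i$ to $r$ is forced to be $t_i,i,r$, and $t_0$ gets exactly one parent $j\in\{1,\dots,k+1\}$; consequently $\dist_T(t_0,t_i)=4$ for every $i\ne j$ while $\dist_{G_k}(t_0,t_i)=1$. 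So each FC tree ``covers'' at most one of the $k+1$ pairs $(t_0,t_i)$, and any $k$ FC trees leave some pair with surplus $3>2$ in all of them, i.e., $G_k$ is a $k$-gadget. Theorem~\ref{main} (which is stated for $d$-gadgets, not only $\infty$-gadgets) then yields the result for each constant $k$. Your proposal never contemplates a family of gadgets indexed by the number of trees, and without that (or some other mechanism replacing the single universally-bad pair) the argument cannot be completed.
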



\begin{proof}
  Suppose the theorem is false and that $k$ trees suffice for some
  constant $k$.  (Note that since we don't know of an appropriate
  $\infty$-gadget for chordal graphs, we have to construct a specific
  gadget for each value of $k$.)  In particular, to construct gadget
  $G_k$ form a $K_{k+2}$ on the nodes $r, 1, 2, \cdots, k+1$.  Now add
  vertex $t_0$ adjacent to $1,2, \cdots, k+1$ and vertices
  $t_i, \: 1 \le i \le k+1$ where $t_i$ is adjacent to $t_0$ and $i$.  In effect we
  have identified an edge of a triangle to each $t_0 ~i$ edge; the new
  vertex is $t_i$.  The root of the gadget is $r$ and the terminals
  are the $k+2$ vertices $t_i, \: 0 \le i \le
    k+1$.  See
  Figure~\ref{fig:example-proof-constr} for a depiction of a
  $G_{2}$ gadget (Figure~\ref{fig:sub:G-3})
  as well as a $G_k$ gadget (Figure~\ref{fig:sub:G-k}).
  To prove that $G_k$ is a $k$-gadget
   note that any set of $k$ FC trees of
  $G_k$ must have at least one edge $t_0 t_i, \: 1 \le i \le k+1$ such
  that the distance in all trees between $t_0$ and $t_i$ is $4$.  Now,
  by applying Theorem~\ref{main} and using gadget
  $G_k$ we complete the proof.
\end{proof}

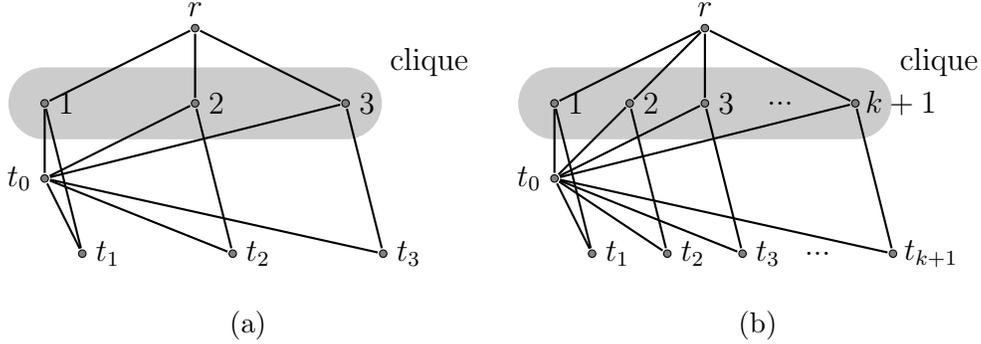
\begin{figure}
  \begin{center}
    \begin{minipage}[t]{0.41\linewidth}
      \begin{tikzpicture}[new set=import nodes,scale=0.5]
  \begin{scope}[nodes={set=import nodes},%
    nodes={%
      inner sep=1pt,draw,circle,draw=black,fill=black!50%
    }, %
    ]%
    \node (r) at (4,8) [label=$r$] {$$}; %
    \node (1) at (0,6) [label=0:$1$] {$$}; %
    \node (3) at (4,6) [label=0:$2$] {$$}; %
    \node (k+1) at (8,6) [label=0:$3$] {$$}; %
    \node (t0) at (0,4) [label=180:$t_0$] {$$}; %
    \node (t1) at (1,2) [label=0:$t_1$] {$$}; %
    \node (t3) at (5,2) [label=0:$t_2$] {$$}; %
    \node (tk+1) at (9,2) [label=0:$t_3$] {$$}; %
  \end{scope}

  \graph[edges={%
    black,shorten >=0.5pt,shorten <=0.5pt,thick %
  }] {%
    (import nodes); %

    r -- {1,3,k+1};%
    t0 -- 1;%
    t0 -- {3,k+1};%
    t0 -- {t1,t3,tk+1};%
    1 -- t1;%
    3 -- t3;%
    k+1 -- tk+1;%
  };%
  \begin{pgfonlayer}{background}
\fill [fill=white] (-1,1) rectangle (11.5,9);
    \node[fill=black!20,rounded rectangle,%
    inner sep=12pt, thick,fit=(1) (3) (k+1),label=5:clique] {};
  \end{pgfonlayer}
\end{tikzpicture} %
      \subcaption{}\label{fig:sub:G-3}
    \end{minipage}
    \begin{minipage}[t]{0.41\linewidth}
      \begin{tikzpicture}[new set=import nodes,scale=0.5]
  \begin{scope}[nodes={set=import nodes},%
    nodes={%
      inner sep=1pt,draw,circle,draw=black,fill=black!50%
    }, %
    ]%
    \node (r) at (4,8) [label=$r$] {$$}; %
    \node (1) at (0,6) [label=0:$1$] {$$}; %
    \node (2) at (2,6) [label=0:$2$] {$$}; %
    \node (3) at (4,6) [label=0:$3$] {$$}; %
    \node (pp1) at (6,6) [draw=black!20,fill=black!20] {...};
    \node (k+1) at (8,6) [label=0:$k+1$] {$$}; %
    \node (t0) at (0,4) [label=180:$t_0$] {$$}; %
    \node (t1) at (1,2) [label=0:$t_1$] {$$}; %
    \node (t2) at (3,2) [label=0:$t_2$] {$$}; %
    \node (t3) at (5,2) [label=0:$t_3$] {$$}; %
    \node (pp1) at (7,2) [draw=white,fill=white] {...};
    \node (tk+1) at (9,2) [label=0:{$t_{k+1}$}] {$$}; %
  \end{scope}

  \graph[edges={%
    black,shorten >=0.5pt,shorten <=0.5pt,thick %
  }] {%
    (import nodes); %

    r -- {1,2,3,k+1};%
    t0 -- 1;%
    t0 -- {2,3,k+1};%
    t0 -- {t1,t2,t3,tk+1};%
    1 -- t1;%
    2 -- t2;%
    3 -- t3;%
    k+1 -- tk+1;%
  };%
  \begin{pgfonlayer}{background}
\fill [fill=white] (-1,1) rectangle (11.5,9);
    \node[fill=black!20,rounded rectangle,%
    inner sep=12pt, thick,fit=(1) (2) (3) (k+1),label=5:clique] {};
  \end{pgfonlayer}
\end{tikzpicture}%
      \subcaption{}\label{fig:sub:G-k}
    \end{minipage}
  \end{center}
  \caption {Gadgets for the proof of Theorem \ref{chordal2}.}
  \label{fig:example-proof-constr}
\end{figure}

\begin{theorem}\label{chordal3}
  No constant number of spanning trees can collectively
  $+3$-span chordal graphs.
\end{theorem}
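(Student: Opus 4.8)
The plan is to follow the template of Theorem~\ref{chordal2}: for each fixed $k$ I would build a \emph{chordal} graph $G_k$ that is a $k$-gadget for surplus $c=3$, and then feed it into the Master Theorem (Theorem~\ref{main}) to conclude that no $k$ spanning trees can collectively $+3$-span a sufficiently large tree of copies of $G_k$; since $k$ is arbitrary, this rules out any constant number of trees. As in the $+2$ case, the engine is the interplay between the FC constraint and a pigeonhole argument: an FC tree is forced to realize the exact root-to-terminal distances of $G_k$, which pins down how the terminals may attach, and this rigidity lets a single tree ``align'' a distinguished central terminal with at most one of $k+1$ branches. With only $k$ FC trees available, some branch is left misaligned in every tree, and that branch witnesses $\dist_T > \dist_G + 3$ simultaneously in all of them.

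Concretely, I would design $G_k$ so that the witnessing pair consists of a central terminal $p$ and branch terminals $t_1,\dots,t_{k+1}$, arranged so that any FC tree is forced to route $p$ and a misaligned $t_j$ down to the root and back up along paths that are disjoint except at the root. In the $+2$ gadget this detour has length $4$ while $\dist_{G_k}(p,t_j)=1$, giving surplus exactly $3$; to reach surplus $4$ the new gadget must force this detour to be one step longer relative to the graph distance. The natural way to buy the extra length is to push the branches one layer deeper, so that the terminals controlling the witness sit at graph-distance $3$ from the root rather than $2$, with the commitment layer one level below $p$; then the unique FC routing between $p$ and a misaligned $t_j$ has total length $\dist_{G_k}(p,t_j)+4$. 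The pigeonhole count -- that $k$ FC trees cannot align $p$ with all $k+1$ branches -- then goes through exactly as in the $+2$ argument.

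The main obstacle, and where essentially all the work lies, is keeping $G_k$ chordal while forcing this longer detour. A naive realization -- a ``deep hub'' adjacent to all the branch terminals, sitting above a spine that connects to the root -- fails immediately: any two branch paths together with the hub and the root form an induced $C_5$ (when the spine is a clique) or $C_6$ (when it is not), so the graph is not chordal. Indeed, the $+2$ gadget is chordal precisely because its hub is adjacent to the \emph{whole} clique of depth-$1$ vertices, which supplies exactly the chords that would otherwise be missing, and this device only works while the hub sits at depth $2$. The fix is to replace the deep hub by a clique-structured commitment layer and to triangulate the deeper branches carefully; in practice I would specify $G_k$ directly by a clique tree (equivalently, by exhibiting a perfect elimination ordering), so that chordality holds by construction. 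One must then check two competing constraints: that the enforced clique bags create no shortcut lowering a root-to-terminal distance (which would destroy the FC rigidity), and that no misaligned branch can reattach so as to share its path low enough to shorten the detour below $\dist_G+4$. Balancing ``enough edges for chordality'' against ``few enough edges to preserve the FC distances and the commitment mechanism'' is the crux; once a clique tree meeting both demands is in hand, the $k$-gadget property and the appeal to Theorem~\ref{main} are routine.
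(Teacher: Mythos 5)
Your high-level plan coincides with the paper's (for each $k$, build a chordal $k$-gadget for $c=3$ and feed it to Theorem~\ref{main}), but the proposal stops exactly where the real work begins. The template --- pigeonhole over which of $k+1$ branches each FC tree can ``align'', plus the appeal to the Master Theorem --- was already set up in Theorem~\ref{chordal2}; the only new content a $+3$ proof must supply is an explicit chordal gadget forcing a detour of surplus at least $4$. You correctly diagnose that the naive deepening (a hub one level lower, adjacent to all branch terminals) destroys chordality, but your proposed repair --- ``a clique-structured commitment layer,'' ``specify $G_k$ by a clique tree / perfect elimination ordering,'' ``balance enough edges for chordality against few enough to preserve FC rigidity'' --- is a list of properties a solution must have, not a construction. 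Nothing is exhibited whose chordality, root-to-terminal distances, or witness pairs could actually be verified, and you yourself flag this as ``the crux.'' So the argument has a genuine hole precisely at its essential step.

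For comparison, the paper's resolution has a different shape from the one you gesture at, which is worth internalizing because it sidesteps your ``balancing'' problem entirely. It does not move the hub deeper at all: it keeps the $+2$ gadget $G_k$ (the clique on $r,1,\dots,k+1$ plus the hub $t_0$) and instead deepens each branch separately, gluing a $3$-sun onto every edge $t_0\,i$, with $t_0$ identified with a degree-$4$ vertex of the sun and $i$ with a degree-$2$ vertex. The three new sun vertices $t_i, t'_i, t''_i$ become the terminals (the hub $t_0$ is no longer a terminal). Chordality is then automatic --- a $3$-sun is chordal, and attaching suns along edges of a chordal graph keeps it chordal --- so no global trade-off between chordality and FC rigidity ever arises. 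The witness pair also changes character: it is not (hub, misaligned branch terminal) but an \emph{adjacent pair inside the sun}. Each FC tree can contain at most one edge $t_0\,j$ (the forced edges $jr$, $j\,u$ for the sun vertices would otherwise close a cycle), so by pigeonhole some edge $t_0\,i$ lies in none of the $k$ trees; for that branch, in every FC tree either $t_i t'_i$ or $t_i t''_i$ has tree distance $6$ against graph distance $1$, surplus $5>3$. This sun-attachment construction is exactly the concrete idea your proposal is missing; without it, or an equally explicit substitute, the theorem is not proved.
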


\begin{proof}
  As with the preceding proof we know of no appropriate
  $\infty$-gadget and have to construct a specific gadget for each
  value of $k$.  In creating these gadgets we will make use of a
  ``3-sun'' consisting of a triangle where every edge of the triangle
  belongs to another unique triangle.

  First we describe the $k=2$ gadget, $G'_2$.  As above we start with
  a $K_4$ on the vertex set $\{ r, 1, 2, 3\}$ and add vertex $t_0$
  universal to $\{1, 2, 3\}$.  But now, instead of identifying an edge
  of a new triangle to each $t_0 ~i$ edge, we identify an exterior
  edge of a new 3-sun with $t_0$ being identified with the degree 4
  vertex of each 3-sun; see Figure~\ref{fig:idea-gadget-proof}.  We
  designate the two new degree 2 vertices of the 3-sun built off the
  $t_0 ~i$ edge as $t'_i$ and ${t''_i}$ and their common neighbor
  as $t_i$.  The terminals of the gadget $G'_2$ are $\{t_1, t'_1,
  {t''_1} , t_2, t'_2, {t''_2} , t_3, t'_3, {t''_3} \}$.  See
  Figure~\ref{fig:idea-gadget-proof}.

  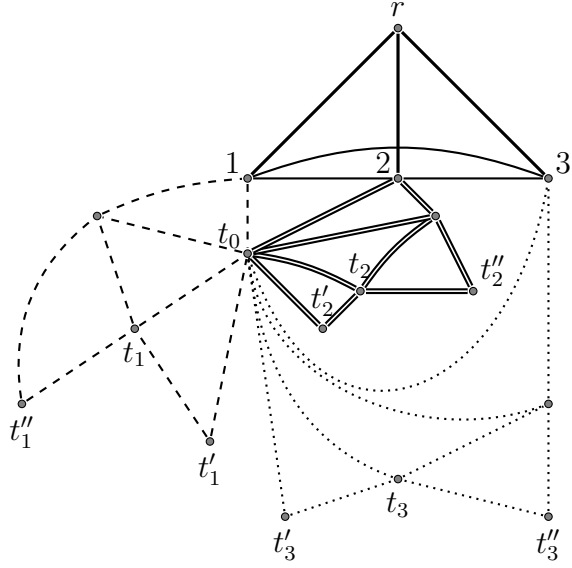
\begin{figure}
    \begin{center}
      \begin{center}
    \begin{tikzpicture}[new set=import nodes]
  \begin{scope}[nodes={set=import nodes},%
    nodes={%
      inner sep=1pt,draw,circle,draw=black,fill=black!50%
    }, %
    ]%
    ]%
    \node (r) at (12,17) [label=$r$] {$$}; %
    \node (1) at (10,15) [label=130:$1$] {$$}; %
    \node (2) at (12,15) [label=130:$2$] {$$}; %
    \node (3) at (14,15) [label=50:$3$] {$$}; %
    \node (t0) at (10,14) [label=135:$t_0$] {$$}; %
    \node (t1) at (8.5,13) [label=270:$t_1$] {$$}; %
    \node (t1'') at (7,12) [label=270:$t_1''$] {$$}; %
    \node (t1') at (9.5,11.5) [label=270:$t_1'$] {$$}; %
    \node (a1) at (8,14.5) [label=135:$$] {$$}; %
    \node (t2) at (11.5,13.5) [label=90:$t_2$] {$$}; %
    \node (t2') at (11,13) [label=90:$t_2'$] {$$}; %
    \node (t2'') at (13,13.5) [label=50:$t_2''$] {$$}; %
    \node (a2) at (12.5,14.5) [label=30:$$] {$$}; %
    \node (t3) at (12,11) [label=270:$t_3$] {$$}; %
    \node (t3') at (10.5,10.5) [label=270:$t_3'$] {$$}; %
    \node (t3'') at (14,10.5) [label=270:$t_3''$] {$$}; %
    \node (a3) at (14,12) [label=0:$$] {$$}; %
  \end{scope}

  \graph[edges={%
    black,shorten >=0.5pt,shorten <=0.5pt,thick %
  }] {%
    (import nodes); %

    r -- [very thick] {1,2,3};%
    1 -- [] 2 -- 3;%
    1 -- [bend left=20] 3;%

t1' -- [dashed] t0;
t1'' -- [dashed] t1;
t1' -- [dashed] t1;
t1 -- [dashed] a1;
t1 -- [dashed] t0;
t1'' -- [dashed, bend left] a1;
t0 -- [dashed] a1;
a1 -- [dashed,bend left=10] 1;
1 -- [dashed] t0;

t2' -- [double] t0;
t2' -- [double] t2;
t2'' -- [double] t2;
t2 -- [double, bend left=10] a2;
t2 -- [double, bend right=10] t0;
t2'' -- [double] a2;
t0 -- [double] a2;
a2 -- [double] 2;
2 -- [double] t0;

t3' -- [dotted] t0;
t3' -- [dotted] t3;
t3'' -- [dotted] t3;
t3 -- [dotted] a3;
t3 -- [dotted, out=160, in=277] t0;
t3'' -- [dotted] a3;
t0 -- [dotted, out=277, in=200] a3;
a3 -- [dotted] 3;
  };%
\draw[thick,dotted]  (t0) .. controls (11,11) and (13.5,12) .. (3);
\end{tikzpicture}
      \end{center}
    \end{center}
    \caption{Idea of the gadget in the proof of
      Theorem\ref{chordal3}.}
    \label{fig:idea-gadget-proof}
  \end{figure}
  As above, any two FC trees of $G_2$ must leave at least one of the
  edges, $\{t_0 1, t_0 2, t_0 3\}$ appearing in neither of the FC
  trees; without loss of generality assume edge $t_0 1$ does not
  appear.  Now in both of the FC trees either edge $t_1 t'_1$ or edge
  $t_1 {t''_1}$ must have distance $6$ in the FC tree.

  The generalization of gadget $G'_2$ to $G'_k$ is straightforward; we
  start with the $G_k$ gadget introduced in the proof of
  Theorem~\ref{chordal2} and identify
  an exterior edge of a new 3-sun with the $t_0 i$
    edge where $t_0$ is identified with the degree $4$ vertex
  of each 3-sun.  We designate the two new degree $2$ vertices of the
  3-sun built off the $t_0 i$ edge as
  $t'_i$ and ${t''_i}$ and their common neighbour as $t_i$.  The
  terminals of the gadget $G'_2$ are $\{t_i, t'_i, {t''_i} : 1 \le i
  \le k+1\}$.  Again there is some edge $t_0 i: 1 \le i \le k+1$ that
  appears in none of the $k$ FC trees of $G'_k$ and in all such trees
  the edge $t_i t'_i$ or the edge
  $t_i {t''_i}$ must have distance $6$ in the FC tree.

  Thus by Theorem~\ref{main} and by using gadget
  $G'_k$ we complete the proof.
\end{proof}



\section{Lower Bound of Collective Tree Spanners for Strongly Chordal
  Graphs}\label{sec:lower-bound-coll}


As mentioned in the Introduction, one spanning tree
is sufficient to +3-span any strongly chordal graph
\cite{BrandstadtCD99} and at most $\log_2n$ spanning trees are
sufficient to +2-span every chordal graph \cite{DYL}. The following
lower bound shows that these results are tight.

\begin{theorem}\label{thm:strchrnoconstant}
  No constant number of spanning trees can collectively $+2$-span
  strongly chordal graphs.
\end{theorem}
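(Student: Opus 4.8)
The plan is to mimic the chordal-graph constructions (Theorems~\ref{chordal2} and~\ref{chordal3}) but to replace the clique-plus-triangles gadget with one whose every even cycle of length at least $6$ has an odd chord, so that the resulting tree of gadgets is strongly chordal rather than merely chordal. As before, I would not look for a single $\infty$-gadget; instead, for each constant $k$ I would build a $k$-gadget $G''_k$ and then invoke the Master Theorem~\ref{main} with $c=2$ to conclude that no $k$ FC-trees (hence no $k$ trees at all, by the reduction to FC-trees implicit in the $d$-gadget definition) can collectively $+2$-span a sufficiently large tree of copies of $G''_k$.

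The concrete construction I would try first is to start from the $G_k$ gadget of Theorem~\ref{chordal2} — a $K_{k+2}$ on $\{r,1,\dots,k+1\}$ together with $t_0$ universal to $\{1,\dots,k+1\}$ and pendant triangles $t_0\,i\,t_i$ — and then \emph{strengthen} each pendant attachment so that the forced detour has surplus $2$ rather than the surplus-$1$ behaviour that sufficed for the $+2$ chordal bound. Recall that in $G_k$ the $k$-gadget property came from the fact that $k$ FC-trees cannot cover all $k+1$ edges $t_0\,i$, so some $t_0 t_i$ has tree-distance $4$ against graph-distance $2$ — a surplus of $2$. The first step is therefore to check whether $G_k$ is already strongly chordal: a $K_{k+2}$ with a universal $t_0$ and a family of pendant triangles hung on the edges $t_0\,i$ is chordal, and I expect that the only even cycles of length $\ge 6$ live inside the clique-plus-$t_0$ part, where odd chords are abundant because that part is almost complete. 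If $G_k$ is in fact already strongly chordal, then the cleanest proof is simply to observe this and re-run the proof of Theorem~\ref{chordal2} verbatim, noting additionally that identifying a terminal of one copy with the root of a child copy preserves strong chordality (gluing chordal-with-good-chord pieces along a single vertex cannot create a new even hole).

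The main obstacle I anticipate is exactly this gluing/closure step: verifying that the \emph{whole} tree of gadgets $H$ is strongly chordal, not just a single $G''_k$. Strong chordality is not obviously preserved under arbitrary vertex-identification, so I would prove a small closure lemma: if $G_1,G_2$ are strongly chordal and we identify a single vertex of $G_1$ with a single vertex of $G_2$, the result is strongly chordal. The proof uses that any induced even cycle of length $\ge 6$ in the union must lie entirely in one part (a cycle through the cut vertex would have to revisit it, so it is not induced unless it stays on one side), whence it inherits an odd chord from that part; one must also re-certify chordality itself under the same gluing, which is the classical clique-cutset argument. Once this closure lemma is in hand, the rest is bookkeeping: state $G''_k$ explicitly, verify it is a $k$-gadget for $c=2$ by the pigeonhole argument on the $k+1$ edges $t_0\,i$, and apply Theorem~\ref{main}. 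If it turns out that $G_k$ is \emph{not} strongly chordal — for instance if some induced six-cycle through two pendant triangles and two clique vertices lacks an odd chord — then the fallback is to thicken each pendant triangle into a small strongly chordal widget (a suitable $3$-sun or trampoline analogue, as was done with $3$-suns in Theorem~\ref{chordal3}) chosen so that the forced surplus is still $\ge 2$, and then run the same closure lemma and Master-Theorem application.
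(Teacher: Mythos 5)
Your proposal has a genuine gap, and it sits exactly at the step you flag as ``the first step'': the gadget $G_k$ of Theorem~\ref{chordal2} is \emph{not} strongly chordal, for any $k\ge 1$. Take the triangle $\{t_0,1,2\}$ together with $r$ (adjacent to $1$ and $2$ but not to $t_0$), $t_1$ (adjacent only to $t_0$ and $1$) and $t_2$ (adjacent only to $t_0$ and $2$): these six vertices induce a $3$-sun; equivalently, the even cycle $t_1,1,r,2,t_2,t_0$ of length $6$ has only the even chords $1\,2$, $1\,t_0$, $2\,t_0$ and no odd chord. By Farber's characterization (strongly chordal $=$ chordal and sun-free), $G_k$ fails to be strongly chordal. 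Moreover, this sun is not an accident of the particular gadget: the entire ``pendant triangle forces a detour'' mechanism --- a degree-$2$ vertex seeing exactly the two ends of an edge, placed around a triangle whose apex $r$ misses $t_0$ --- is precisely what manufactures suns. For the same reason your fallback cannot work as stated: ``thickening each pendant triangle into \dots a suitable $3$-sun'', as in Theorem~\ref{chordal3}, produces graphs \emph{containing} induced $3$-suns, which are the minimal forbidden configuration for strong chordality; it moves you further from the class rather than into it.

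What the paper actually does is construct a genuinely different, sun-free gadget, and this construction is the heart of the theorem, not bookkeeping. The gadget (a ``star graph'') has a root $r$, a clique $b_0,\dots,b_{2^k-1}$ of vertices adjacent to $r$, a clique of twins $d_0,\dots,d_{2^k-1}$ with all $b$--$d$ edges present, and a hierarchy of $c$ vertices indexed by a recursive binary partition of the $d$'s into layers $0,\dots,k$, each $c$ vertex adjacent to exactly the $b$'s and $d$'s of its block; strong chordality is verified by exhibiting a simple elimination ordering. The lower bound is then a covering argument down the hierarchy: in an FC tree every distance-$2$ vertex has a single $b$ parent, so a terminal edge is $+1$-spanned only if its two ends share their $b$ parent in some tree; since each tree fixes one $b$ parent per $c$ vertex, descending $\lceil\log_2(\delta+1)\rceil$ layers per tree shows that for $k=\delta\cdot\lceil\log_2(\delta+1)\rceil$ there remain a $c$ vertex and a twin $d$ vertex whose edge no tree of the $\delta$ trees can cover, whence no $\delta$ FC trees $+2$-span $G_k$, and Theorem~\ref{main} finishes. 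None of this is recoverable from the pendant-triangle gadget. For what it is worth, your peripheral points are sound and agree with the paper: the closure lemma for gluing at a cut vertex holds (every cycle lies in a block, hence on one side), and the overall frame of a per-$\delta$ gadget fed into the Master Theorem is the right one; what is missing is the actual strongly chordal gadget, which is the substance of the proof.
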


We create a family of strongly chordal graphs that will serve as
gadgets for showing that no constant number of trees can collectively
$+2$ span strongly chordal graphs.  More precisely, for each positive
integer $\delta$ we define a strongly chordal graph $G_{f(\delta)}$
(where $f(\delta)$ is a function of $\delta$) that serves as a gadget
for the general construction in the ``Master'' Theorem
(Thm.~\ref{main}).  Note that, as in the
case of chordal graphs, we need a different
gadget for every $\delta$.

First we will define a family of strongly chordal graphs $G_k$ ($k
\geq 1$).  Then we will show that for each positive integer $\delta$
there is a graph $G_{f(\delta)}$, such that no set of $\delta$ FC trees
can collectively $+2$ span the graph
$G_{f(\delta)}$.  Finally, by using
Theorem~\ref{main}, we can conclude that for each $\delta$ there is a
strongly chordal graph such that there is no set of $\delta$ trees
that collectively $+2$ spans this graph.

\paragraph{Definition of $G_k$.}
\label{sec:definition-g_d}

The definition of the graphs $G_k$ is somewhat involved.  Therefore,
for better understanding of the construction, the graphs $G_3$ and
$G_4$ are depicted in Figure~\ref{fig:example-gadget-case}
and~\ref{fig:larger-example}.  The graph $G_k$ is defined to consist
of a root vertex $r$ and three sets of vertices, the $b$ vertices, the
$c$ vertices and the $d$ vertices.  There are $2^k$ $b$ vertices
$\{b_0, b_1, \dots, b_{2^k-1}\}$; these vertices form a clique and are
all adjacent to the root vertex $r$.  Similarly, there are $2^k$ $d$
vertices $\{d_0, d_1, \dots, d_{2^k-1}\}$ and also these vertices form
a clique.  Furthermore all the edges between all $b$ and $d$ vertices
are present, i.e., the $b$ and $d$ vertices together form a clique
with $2\cdot 2^k$ vertices.  The definition of the $c$ vertices and
their adjacencies is a bit more complex.  For this consider a
recursive partition of the set of $d$ vertices.  Let $D^{\emptyset}$
be the set of all $d$ vertices, i.e., $D^{\emptyset} = \{d_0, d_1,
\dots, d_{{2^k}-1}\}$.  Now partition $D^{\emptyset}$ into two sets
(the ``left'' and the ``right'' vertices of $D^{\emptyset}$) $D^{\ell}
=\{d_0, d_2, \dots, d_{2^{k-1}-1}\}$ and $D^{r} = \{d_{2^{k-1}},
\dots, d_{2^{k}-1}\}$.  In the next step partition each of the sets
$D^{\ell}$ and $D^r$ again into their left and right subsets (denote
them by adding ``$\ell$'' or ``r'' to their superscript,
correspondingly) and repeat this partitioning recursively, until each
set contains only one element.
For the case $k=3$ this means:\\
$D^{\emptyset}=\{d_0, d_1, d_2, d_3, d_4, d_5,
d_6, d_7\}$,\\
$D^{\ell}=\{d_0, d_1, d_2, d_3\}$, $D^{r}=\{d_4, d_5, d_6,
d_7\}$,\\
$D^{\ell \ell}=\{d_0, d_1\}$, $D^{\ell r}=\{d_2, d_3\}$,
$D^{r \ell}=\{d_4, d_5\}$, $D^{rr}=\{d_6, d_7\}$,\\
$D^{\ell \ell \ell}=\{d_0\}$, $D^{\ell \ell r}=\{d_1\}$, $D^{\ell r
  \ell}=\{d_2\}$, $D^{\ell r r}=\{d_3\}$, $D^{r \ell \ell}=\{d_4\}$,
$D^{r \ell r}=\{d_5\}$, $D^{r r \ell}=\{d_6\}$, $D^{r r r}=\{d_7\}$.

For each of these $D$ sets we create a $c$ vertex with the same index,
that is adjacent to all vertices of this particular $D$ set.  In our
example, for the set $D^{\ell r}=\{d_2, d_3\}$ we create a vertex
$c^{\ell r}$ that is adjacent to $d_2$ and $d_3$.  Furthermore, if
there is an edge between a $c$ vertex and some $d$ vertex $d_i$ we add
the corresponding edge between that $c$ vertex and the vertex $b_i$.
In our example, the vertex $c^{\ell r}$ is also adjacent to the
vertices $b_2$ and $b_3$.  Since the vertices $b_i$ and $d_i$ have the
same neighborhood among the $c$ vertices we will later refer to them
as ``twins''.  Because of the hierarchical structure of the collection
of $D$ sets the $c$ vertices can be assigned to so-called layers,
where layer~$0$ consists only of vertex $c^{\emptyset}$, layer~$1$
contains the vertices $c^{\ell}$ and $c^{r}$, layer~$2$ the vertices
$c^{\ell \ell}, c^{\ell r}, c^{r \ell}, c^{r r}$ and, in general,
layer~$t$ contains all vertices with an $\ell/r$ sequence of length
$t$.  The graph $G_k$ is also called a \emph{star graph}
in~\cite{stargraph}.

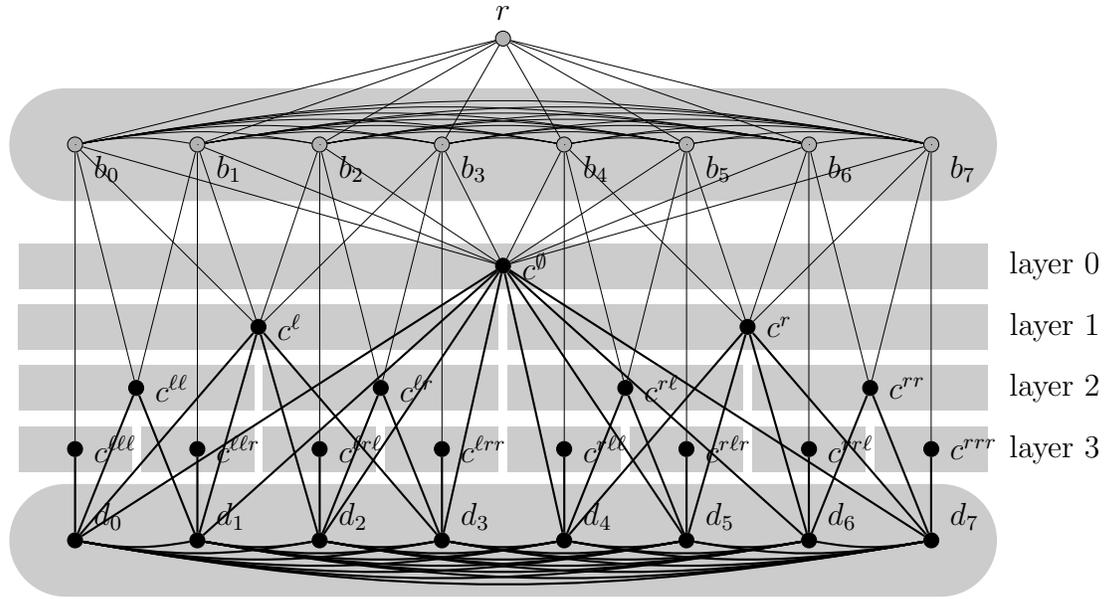
\begin{figure}
\tikzset{%
  regularNode/.style={%
    inner sep=2pt,draw,circle,draw=black,fill=black!30%
  },%
  terminalNode/.style={%
    inner sep=2pt,draw,circle,draw=black,fill=black%
  },%
  regularEdge/.style={%
    very thin%
  },%
  terminalEdge/.style={%
    thick%
  },%
  nodeBLevel/.style={
    fill=black!20,%
    rounded rectangle,%
    inner sep=1pt,%
    minimum height=15mm,%
    minimum width=132mm%
  },%
  nodeLevel0/.style={
    fill=black!20,%
    rectangle,%
    minimum height=6mm,%
    minimum width=128.8mm%
  },%
  nodeLevel1/.style={
    fill=black!20,%
    rectangle,%
    minimum height=6mm,%
    minimum width=63.8mm%
  },%
  nodeLevel2/.style={
    fill=black!20,%
    rectangle,%
    minimum height=6mm,%
    minimum width=31.25mm%
  },%
  nodeLevel3/.style={
    fill=black!20,%
    rectangle,%
    minimum height=6mm,%
    minimum width=15mm%
  },%
  nodeDLevel/.style={
    fill=black!20,%
    rounded rectangle,%
    inner sep=1pt,%
    minimum height=15mm,%
    minimum width=132mm%
  }%
}%
\begin{center}
\begin{tikzpicture}
  \matrix[row sep=2mm,column sep=6mm] {%

    & & & & & & & \node[regularNode] (root) {};\\ \\ \\ \\ \\ \\%

    \node[regularNode] (b0) {}; & &%
    \node[regularNode] (b1) {}; & &%
    \node[regularNode] (b2) {}; & &%
    \node[regularNode] (b3) {}; & &%
    \node[regularNode] (b4) {}; & &%
    \node[regularNode] (b5) {}; & &%
    \node[regularNode] (b6) {}; & &%
    \node[regularNode] (b7) {};\\%

    \\ \\ \\ \\ \\ \\%

    & & & & & & & %
    \node[terminalNode] (c0p0) {};%
    \\ \\ \\%
    & & & %
    \node[terminalNode] (c1p0) {}; & & & & & & & & %
    \node[terminalNode] (c1p1) {};\\ \\ \\%
    & %
    \node[terminalNode] (c2p0) {}; & & & & %
    \node[terminalNode] (c2p1) {}; & & & & %
    \node[terminalNode] (c2p2) {}; & & & & %
    \node[terminalNode] (c2p3) {};\\ \\ \\%
    \node[terminalNode] (c3p0) {}; & & %
    \node[terminalNode] (c3p1) {}; & & %
    \node[terminalNode] (c3p2) {}; & & %
    \node[terminalNode] (c3p3) {}; & & %
    \node[terminalNode] (c3p4) {}; & & %
    \node[terminalNode] (c3p5) {}; & & %
    \node[terminalNode] (c3p6) {}; & & %
    \node[terminalNode] (c3p7) {}; \\%
    \\ \\ \\ \\ %

    \node[terminalNode] (d0) {}; & &%
    \node[terminalNode] (d1) {}; & &%
    \node[terminalNode] (d2) {}; & &%
    \node[terminalNode] (d3) {}; & &%
    \node[terminalNode] (d4) {}; & &%
    \node[terminalNode] (d5) {}; & &%
    \node[terminalNode] (d6) {}; & &%
    \node[terminalNode] (d7) {};\\%
  };%
  \graph {
    (root) -- [regularEdge] %
    {(b0), (b1), (b2), (b3), (b4), (b5), (b6), (b7)};%

    (c0p0) -- [regularEdge] {(b0), (b1), (b2), (b3), (b4), (b5), (b6), (b7)};%
    (c0p0) -- [terminalEdge] {(d0), (d1), (d2), (d3), (d4), (d5), (d6), (d7)};%

    (c1p0) -- [regularEdge] {(b0), (b1), (b2), (b3)};%
    (c1p0) -- [terminalEdge] {(d0), (d1), (d2), (d3)};%

    (c1p1) -- [regularEdge] {(b4), (b5), (b6), (b7)};%
    (c1p1) -- [terminalEdge] {(d4), (d5), (d6), (d7)};%

    (c2p0) -- [regularEdge] {(b0), (b1)};%
    (c2p0) -- [terminalEdge] {(d0), (d1)};%

    (c2p1) -- [regularEdge] {(b2), (b3)};%
    (c2p1) -- [terminalEdge] {(d2), (d3)};%

    (c2p2) -- [regularEdge] {(b4), (b5)};%
    (c2p2) -- [terminalEdge] {(d4), (d5)};%

    (c2p3) -- [regularEdge] {(b6), (b7)};%
    (c2p3) -- [terminalEdge] {(d6), (d7)};%

    (c3p0) -- [regularEdge] {(b0)};%
    (c3p0) -- [terminalEdge] {(d0)};%

    (c3p1) -- [regularEdge] {(b1)};%
    (c3p1) -- [terminalEdge] {(d1)};%

    (c3p2) -- [regularEdge] {(b2)};%
    (c3p2) -- [terminalEdge] {(d2)};%

    (c3p3) -- [regularEdge] {(b3)};%
    (c3p3) -- [terminalEdge] {(d3)};%

    (c3p4) -- [regularEdge] {(b4)};%
    (c3p4) -- [terminalEdge] {(d4)};%

    (c3p5) -- [regularEdge] {(b5)};%
    (c3p5) -- [terminalEdge] {(d5)};%

    (c3p6) -- [regularEdge] {(b6)};%
    (c3p6) -- [terminalEdge] {(d6)};%

    (c3p7) -- [regularEdge] {(b7)};%
    (c3p7) -- [terminalEdge] {(d7)};%

  };%

  \foreach \i in {0, ..., 7} {%
    \foreach \j in {\i, ..., 7} { %
      \draw [-,regularEdge] (b\i) to [bend left=10] (b\j);%
    }%
  }%

  \foreach \i in {0, ..., 7} {%
    \foreach \j in {\i, ..., 7} { %
      \draw [-,terminalEdge] (d\i) to [bend right=10] (d\j);%
    }%
  }%


  \begin{pgfonlayer}{background}
    \node[nodeBLevel,fit=(b0) (b1) (b2) (b3) (b4) (b5) (b6) (b7)] {};%
    \node[nodeLevel0,fit=(c0p0),label=0:~~layer $0$] {};%
    \node[nodeLevel1,fit=(c1p0)] {};%
    \node[nodeLevel1,fit=(c1p1),label=0:~~layer $1$] {};%
    \node[nodeLevel2,fit=(c2p0)] {};%
    \node[nodeLevel2,fit=(c2p1)] {};%
    \node[nodeLevel2,fit=(c2p2)] {};%
    \node[nodeLevel2,fit=(c2p3),label=0:~~layer $2$] {};%
    \node[nodeLevel3,fit=(c3p0)] {};%
    \node[nodeLevel3,fit=(c3p1)] {};%
    \node[nodeLevel3,fit=(c3p2)] {};%
    \node[nodeLevel3,fit=(c3p3)] {};%
    \node[nodeLevel3,fit=(c3p4)] {};%
    \node[nodeLevel3,fit=(c3p5)] {};%
    \node[nodeLevel3,fit=(c3p6)] {};%
    \node[nodeLevel3,fit=(c3p7),label=0:~~layer $3$] {};%
    \node[nodeDLevel,fit=(d0) (d1) (d2) (d3) (d4) (d5) (d6) (d7)] {};%
  \end{pgfonlayer}

  \coordinate [label=$r$] (rootLabel) at (root.north);

  \coordinate [label=315:$b_0$] (b0Label) at (b0.east); %
  \coordinate [label=315:$b_1$] (b1Label) at (b1.east); %
  \coordinate [label=315:$b_2$] (b2Label) at (b2.east); %
  \coordinate [label=315:$b_3$] (b3Label) at (b3.east); %
  \coordinate [label=315:$b_4$] (b4Label) at (b4.east); %
  \coordinate [label=315:$b_5$] (b5Label) at (b5.east); %
  \coordinate [label=315:$b_6$] (b6Label) at (b6.east); %
  \coordinate [label=315:$b_7$] (b7Label) at (b7.east); %

  \coordinate [label=0:$c^{\emptyset}$] (c0p0Label) at (c0p0.east);%
  \coordinate [label=0:$c^{\ell}$] (c1p0Label) at (c1p0.east); %
  \coordinate [label=0:$c^{r}$] (c1p1Label) at (c1p1.east); %
  \coordinate [label=0:$c^{\ell \ell}$] (c2p0Label) at (c2p0.east); %
  \coordinate [label=0:$c^{\ell r}$] (c2p1Label) at (c2p1.east); %
  \coordinate [label=0:$c^{r \ell}$] (c2p2Label) at (c2p2.east); %
  \coordinate [label=0:$c^{rr}$] (c2p3Label) at (c2p3.east); %
  \coordinate [label=0:$c^{\ell \ell \ell}$] (c3p0Label) at (c3p0.east); %
  \coordinate [label=0:$c^{\ell \ell r}$] (c3p1Label) at (c3p1.east); %
  \coordinate [label=0:$c^{\ell r \ell}$] (c3p2Label) at (c3p2.east); %
  \coordinate [label=0:$c^{\ell r r}$] (c3p3Label) at (c3p3.east); %
  \coordinate [label=0:$c^{r \ell \ell}$] (c3p4Label) at (c3p4.east); %
  \coordinate [label=0:$c^{r \ell r}$] (c3p5Label) at (c3p5.east); %
  \coordinate [label=0:$c^{r r \ell}$] (c3p6Label) at (c3p6.east); %
  \coordinate [label=0:$c^{r r r}$] (c3p7Label) at (c3p7.east); %

  \coordinate [label=45:$d_0$] (d0Label) at (d0.east); %
  \coordinate [label=45:$d_1$] (d1Label) at (d1.east); %
  \coordinate [label=45:$d_2$] (d2Label) at (d2.east); %
  \coordinate [label=45:$d_3$] (d3Label) at (d3.east); %
  \coordinate [label=45:$d_4$] (d4Label) at (d4.east); %
  \coordinate [label=45:$d_5$] (d5Label) at (d5.east); %
  \coordinate [label=45:$d_6$] (d6Label) at (d6.east); %
  \coordinate [label=45:$d_7$] (d7Label) at (d7.east); %
\end{tikzpicture}
\caption{Example of gadget for the case $k=3$.  In addition to the
  edges depicted in the figure, all edges between the $b$ and the $d$
  vertices exist.  Terminal vertices are black and terminal edges are
  thick.  The gray boxes around the $c$ vertices are to point out the
  adjacency of the corresponding $c$ vertex to the $b$ and $d$
  vertices.}
  \label{fig:example-gadget-case}
\end{center}
\end{figure}

\begin{figure}
\input{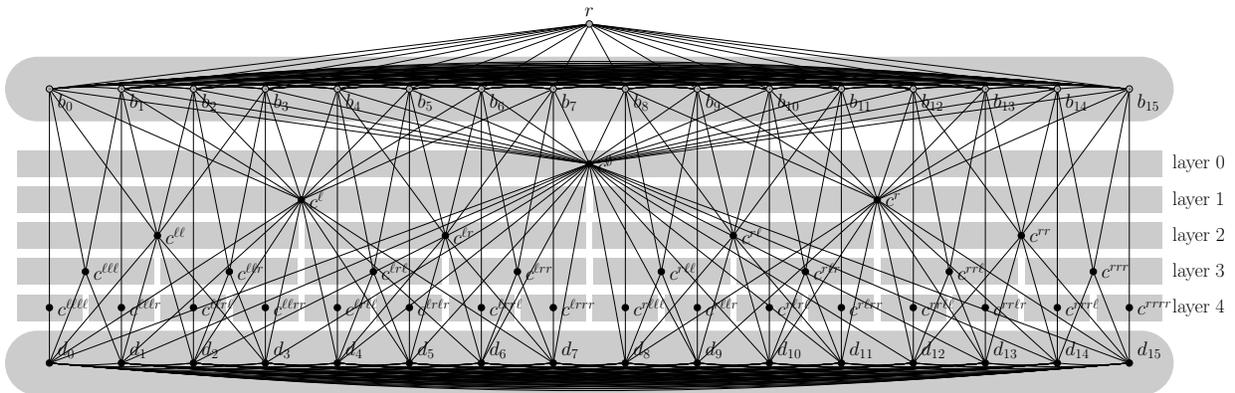}
\caption{Gadget for the case $k=4$.}
  \label{fig:larger-example}
\end{figure}

\begin{lemma}\label{lemma:strongly chordal}
  The graph $G_k$ is strongly chordal.
\end{lemma}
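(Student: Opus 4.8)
The plan is to exhibit a \emph{simple elimination ordering} and invoke the characterization of Farber~\cite{faber-strongly-chordal-83}: a graph is strongly chordal if and only if it admits an ordering $v_1,\dots,v_n$ of its vertices such that each $v_i$ is \emph{simple} in the subgraph induced by $\{v_i,\dots,v_n\}$, where a vertex $v$ is simple if the closed neighborhoods $\{N[u]:u\in N[v]\}$ are linearly ordered by inclusion. The structural facts that drive the argument are already visible in the construction: the $c$ vertices form an independent set whose neighbors lie entirely inside the $b$--$d$ clique, and for every index $i$ the vertices $b_i$ and $d_i$ are twins, differing only in that $b_i\sim r$ while $d_i\not\sim r$; consequently $N[d_i]=N[b_i]\setminus\{r\}\subseteq N[b_i]$ throughout the elimination. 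Since a simple vertex is in particular simplicial, this will simultaneously establish chordality.

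First I would peel off the $c$ vertices layer by layer, from the bottom layer $k$ up to the top layer $0$, and only afterwards remove $r$ and finally the vertices of the $b$--$d$ clique in arbitrary order. The key claim is that when only the $c$ vertices of layers $0,\dots,t$ remain, every layer-$t$ vertex $c^{S}$ (with $|S|=t$) is simple. Its neighbors are exactly $\{b_i,d_i:d_i\in D^{S}\}$, all of which sit inside the $b$--$d$ clique; the point is that, once every $c$ vertex below layer $t$ has been deleted, all of these $b_i$ acquire one and the same closed neighborhood, namely $\{r\}\cup B\cup D\cup\{c^{S'}:S'\preceq S\}$, and likewise all of these $d_i$ acquire the common value $B\cup D\cup\{c^{S'}:S'\preceq S\}$. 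Hence $\{N[u]:u\in N[c^{S}]\}$ collapses to the two-element chain $N[d_i]\subsetneq N[b_i]$, so $c^{S}$ is simple. After all $c$ vertices are gone, the remaining graph is the clique on the $b$ and $d$ vertices together with $r$ joined to every $b_i$; here $r$ is simple (all its neighbors share the same closed neighborhood, the whole remaining vertex set) and every vertex of a clique is trivially simple, so the ordering completes.

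The one step that needs genuine care --- and which I expect to be the main obstacle --- is justifying this twin collapse, i.e.\ that after deleting layers $t+1,\dots,k$ the surviving $c$-neighbors of $b_i$ depend only on the block $D^{S}$ containing $d_i$ and not on $i$ itself. This follows from the recursive definition of the $D$-sets: the $c$ vertices adjacent to $b_i$ are precisely those $c^{S'}$ with $d_i\in D^{S'}$, that is, those whose index $S'$ is a prefix of the root-to-leaf $\ell/r$ sequence of $d_i$; restricting to $|S'|\le t$ leaves exactly the prefixes of $S$, which are identical for every $d_i\in D^{S}$. I would also note that deleting siblings within a single layer does not disturb the argument, since distinct layer-$t$ vertices correspond to disjoint blocks $D^{S}$ and hence have disjoint neighborhoods in the $b$--$d$ clique, so the simplicity of each layer-$t$ vertex is unaffected by the order in which its siblings are removed. (One could alternatively verify strong chordality by checking that $G_k$ contains no induced $m$-sun, but the simple-elimination argument is cleaner because it exploits the twin structure of $b_i$ and $d_i$ directly.)
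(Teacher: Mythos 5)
Your proof is correct and follows essentially the same route as the paper's: Farber's simple-elimination-ordering characterization, eliminating the $c$ vertices layer by layer from the bottom layer up (using the $b_i$/$d_i$ twin structure so that the closed neighborhoods of the neighbors of each $c^{S}$ collapse to a two-element chain), and then finishing off the remaining graph, which is the $b$--$d$ clique together with $r$. The only deviation --- you remove $r$ before the $b$--$d$ clique, whereas the paper removes the $d$ vertices, then the $b$ vertices, then $r$ --- is immaterial, and your explicit prefix argument for the twin collapse merely fills in a detail the paper leaves implicit.
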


\begin{proof}
  To prove that $G_k$ is strongly chordal, we will use the
  characterization of strongly chordal graphs provided by Farber in
  ~\cite{faber-strongly-chordal-83}; namely, a graph is strongly
  chordal if and only if it admits a vertex ordering
  $\sigma=[v_1,v_2,\dots, v_n]$, (called a {\em simple elimination
    ordering}), such that vertex $v_i$ is simple in $G[\{v_i, \dots,
  v_n\}]$, where a vertex $v$ is called \emph{simple} if the closed
  neighborhoods of $v$'s neighbors can be linearly ordered by set
  inclusion, or, alternatively, if for each pair of neighbors $u,w$ of
  $v$ we have either $N[u] \subseteq N[w]$ or $N[w] \subseteq N[u]$.
  In particular, we need to show that there is a
  simple elimination ordering $\sigma$ of the vertices of $G_k$.
  Consider the following ordering:



  \begin{itemize}
  \item Start with the set of $c$ vertices and remove them beginning
    with the largest numbered layer (i.e., the lowest layer in
    Figures~\ref{fig:example-gadget-case}
    and~\ref{fig:larger-example}, iteratively up to layer~0.  Each of
    these $c$ vertices is only adjacent to $b$ and $d$ vertices and,
    since the $c$ vertices are removed beginning with the largest
    numbered layer layer, all these $d$
    vertices have the same neighborhood and all these $b$ vertices
    also have the same neighborhood plus vertex $r$.
  \item Next remove all $d$ vertices.  They form a clique with the $b$
    vertices and have no further neighbors.
  \item Finally remove all $b$ vertices, followed by vertex $r$.
  \end{itemize}
\end{proof}

\medskip{}

For the construction of FC trees on the graphs $G_k$ we need to
consider the distance levels from the root node $r$
(note that these \emph{levels} should not be confused
  with the above defined \emph{layers}).  It is easy to see that the
$b$ vertices are at distance $1$ from $r$ and the $c$ and $d$ vertices
are at distance $2$ from $r$.  All vertices on the distance-$2$ level
are considered to be \emph{terminals} of the gadget.  Thus a spanning
tree $T$ of the gadget $G_k$ will be a Fast
Connecting (FC) tree if $\dist_T(r,x) =
\dist_{G_k}(r,x)$, where $x$ is an arbitrary vertex on level $2$.
Therefore the edges on level 2 (called \textit{terminal edges}) cannot
belong to any FC tree.

\medskip

Now we will show that for each positive integer $\delta$ there is a
gadget graph $G_k$ with $k = f(\delta)$ such that there are no
$\delta$ FC trees that collectively $+2$ span $G_k$.  According to the
definition of FC trees, a vertex on level $2$ of an FC tree must have
a parent vertex in that FC tree on level $1$, and a vertex of an FC
tree on level $1$ must have $r$ as its parent.  Thus, it is easy to
observe that an FC tree $T$ can either $+1$ span or $+3$ span a
terminal edge.  Therefore, for showing that there are no $\delta$ FC
trees that collectively $+2$ span $G_k$, it is
sufficient to prove that no set of
$\delta$ FC trees can collectively $+1$ span all terminal edges of
$G_k$.

\medskip{}

For the beginning let $\delta=2$ and consider the graph $G_k$ with $k
= \delta \cdot (\lceil \log_2{\delta+1} \rceil)=4$.  Suppose there is
a set of $\delta=2$ FC trees $\{T^1, T^2\}$ that collectively $+1$
span $G_4$ (see Figure~\ref{fig:larger-example}).  We know that each
edge between $c^{\emptyset}$ and any of the $d$ vertices, say $d_i$,
has to be covered in one of $T^1$ or $T^2$, where \emph{cover} means
that $c^{\emptyset}$ and $d_i$ have 
either a common neighbor $b^1$ in $T^1$ and/or a common neighbor $b^2$
in $T^2$.  Since $T^1$ and $T^2$ are trees there is only one such
vertex $b^1$ in $T^1$ and one such vertex $b^2$ in $T^2$ (if all
$c^{\emptyset} d_i$ edges are covered in only one of the trees, say
$T^1$ then select an arbitrary $b$ vertex as $b^2$).

Later we will make use of the fact that each $d$ vertex is adjacent
either to $b^1$ in $T^1$ or to $b^2$ in $T^2$.

As noted earlier, the vertices $b^1$ and $b^2$ each have a ``twin''
$d^1$ and $d^2$ in the set of $d$ vertices, that has the same
neighborhood among the $c$ vertices.  Now consider the hierarchy of
$D$ sets.  By the definition of these sets it is clear that there is
only one set in each layer that contains the vertex $d^1$ and,
similarly, there is only one set in each layer that contains the
vertex $d^2$.  Consequently, each of $b^1$ and $b^2$ has only one
neighbor on each of the layers of the $c$ vertices.  Hence, among the
four $c$ vertices on layer~$2$ there are
two that are neither adjacent to $b^1$ nor to $b^2$.  Let $c^{1}$ be
either of these two $c$ vertices on layer~$2$ and $D^{1}$ the
corresponding $D$ set.

Similarly as in the initial step, for all $d_i \in D^{1}$ the edge
$c^{1} d_i$ has to be covered in at least one of the trees $T^1$ or
$T^2$.  Since both trees are FC trees and $c^{1}$ is neither adjacent
to $b^1$ nor to $b^2$, there have to be vertices ${b^1}', {b^2}'$ such
that the edge ${b^1}' c^{1}$ is contained in $T^1$ and/or
the edge ${b^2}' c^{1}$ is contained in $T^2$.

Next observe, that if the edge $c^{\emptyset} d_i$ is covered in
$T^1$, then the edge $c^{1} d_i$ cannot be covered in $T^1$ because
otherwise the vertices $r, b^1, d_i, {b^1}'$ form a cycle in $T^1$,
contradicting $T^1$ being a tree.  Consequently, for all vertices $d_i
\in D^{1}$ whose edge with $c^{\emptyset}$ is covered in $T^1$, the
edge $c^{1} d_i$ has to be covered in $T^2$; reversely, for all
vertices $d_i$ whose $c^{\emptyset} d_i$ edge is covered in $T^2$, the
edge $c^{1} d_i$ has to be covered in $T^1$.  (***) Note that now each
$d_i \in D^{1}$ is adjacent to exactly one of $b^1, {b^1}'$ in $T^1$
and is also adjacent to exactly one of $b^2, {b^2}'$ in $T^2$.

Now consider the four $c$-vertices on layer~$4$
($4=2+2=1+\lceil \log_2{d} \rceil + 1+\lceil \log_2{d} \rceil$) that
are ``below'' $c^{1}$ (i.e., the union of their neighborhood in the
$D$ layer forms exactly the set $D^{1}$).  None of these vertices is
adjacent to $b^1$ or to $b^2$ and at least two of them are neither
adjacent to ${b^1}'$ nor to ${b^2}'$.  Let $c^{2}$ be one of these two
vertices, let $D^{2}$ be the corresponding $D$ set and consider a
vertex $d_i$ in $D^{2}$ (in fact there is only one such vertex).
Again the edge $c^{2} d_i$ needs to be covered and, since $c^{2}$ is
not adjacent to any of $b^1, {b^1}', b^2, {b^2}'$, there has to be a
$b''$ vertex connecting $c^{2}$ with $r$, that covers the $c^{2} d_i$
edge in one of the trees, say in $T^1$.  However, then we have a cycle
on $d_i, b'', r$ and $b^1$ or ${b^1}'$, because of (***),
contradicting $T^1$ being a tree.

Consequently, there is no set of two FC trees that can collectively +2
span all edges of the gadget.

\bigskip{}

Based on the construction for $\delta=2$ the proof for arbitrary
$\delta$ can be formulated easily.  The important observation is that
each time that a new $c$ vertex is considered (in the case $\delta=2$
these were vertices $c^{\emptyset}$, $c^{1}$, $c^{2}$) for every $d$
vertex of the corresponding nested sequence of $D$ sets
($D^{\emptyset}, D^{1}, D^{2}$) the parent $b$ vertex in at least one
of the $\delta$ FC trees is fixed.  Thus after $\delta$ rounds of
selecting $c$ vertices, there is a $d$ vertex, say $\tilde{d}$ for
which in each of the $\delta$ FC trees the parent $b$ vertex is
already fixed.  If now a suitable $c$ vertex, say $\tilde{c}$, is
selected that is not adjacent to any of the already fixed $b$
vertices, then the $\tilde{c} \tilde{d}$ edge can not be covered in
any of the $\delta$ FC trees.

More precisely, the argument is as follows.  Let $\delta$ be an
integer larger than $2$ and consider the graph $G_k$ with $k = \delta
\cdot (\lceil \log_2{\delta+1} \rceil)$.  Suppose for contradiction,
there is a set of $\delta$ FC trees $\{T^1, \dots, T^{\delta}\}$ that
collectively $+1$ span $G_k$.  Now consider vertex $c^{\emptyset}$; in
each of the $\delta$ FC trees there is a $b$ parent of
$c^{\emptyset}$.  Further, each edge between $c^{\emptyset}$ and a
vertex in $D^{\emptyset}$ has to be covered in one of the $\delta$
trees and, hence, by one of those $b$ vertices; we call the set of
these $\delta$ $b$ vertices $B^{\emptyset}$.  Consider the successors
of vertex $c^{\emptyset}$ in the hierarchy of $c$ vertices on
layer~$\lceil \log_2{\delta+1} \rceil$;
since there are more than $\delta + 1$ successor vertices in that
layer, there has to be at least one such
$c$ vertex, say $c^{1}$, that is not adjacent to any vertex in
$B^{\emptyset}$; let $D^{1}$ be the corresponding $D$ set of $c^{1}$.
Again, $c^{1}$ has to have a $b$ parent in each of the FC trees (we
call this set $B^{1}$) and the edges from $c^{1}$ to each of the
vertices of $D^{1}$ have to be covered by these $B^{1}$ vertices,
since all $b$ vertices in $B^{\emptyset}$ are not adjacent to $c^{1}$.
Note further, that each of the $D^{1}$ vertices already has a $b$
parent in $B^{\emptyset}$ since its edge to $c^{\emptyset}$ had to be
covered as well by an FC tree.  Now we continue by
selecting a vertex $c^{2}$ that is a successor of $c^{1}$ in the
hierarchy of $c$ vertices in layer~$2 \cdot
\lceil \log_2{\delta+1} \rceil$ and that is not adjacent to any vertex
in $B^{1}$.  By the hierarchy of the $c$ vertices $c^{2}$ is not
adjacent to any vertex in $B^{\emptyset}$ either.  Again we find a set
of $b$ vertices $B^{1}$ that cover all the
edges between $c^{2}$ and the vertices in $D^{2}$.

We continue in this fashion and finally select a vertex $c^{\delta}$
on layer $\delta \cdot \lceil \log_2{\delta+1} \rceil$, that is not
adjacent to any of the previously selected vertices in the $B^{i}$
sets ($i \in \{\emptyset, 1, \dots, \delta-1\}$).  Furthermore, each
vertex in $D^{\delta}$ already has a $b$ parent in each of the sets
$B^{i}$.  To cover the edges from $c^{\delta}$ to such a vertex $d \in
D^{\delta}$ we need to have a common $b$ parent of $d $ and
$c^{\delta}$ in one of the $\delta$ FC trees.  However, vertex $d$
already has a $b$ parent in each of these trees and, by the definition
of $c^{\delta}$, none of these $b$ vertices is adjacent to
$c^{\delta}$.  Consequently, none of the terminal edges between
$c^{\delta}$ and the vertices in $D^{\delta}$ can be covered by any of
the $\delta$ FC trees, which finishes our proof.




\section{Spanners with high treewidth}
\label{sec:spanners-with-high}


In this section we show that for any constants $k \ge 2$ and $c \ge 1$
there are graphs of treewidth $k$ such that no spanning subgraph of
treewidth $k-1$ can be a $c$-spanner of such a graph.  This result is
known for $k=2$ \cite{KLM}; in particular, a family of treewidth 2
graphs is constructed and it is shown that for any $c$ there is such a
graph $G_c$ where no spanning tree
$c$-spans $G_c$.  The proof in \cite{KLM} is quite complicated and we
first present the sketch of an easy proof of the $ k = 2$ case; this
proof provides insight into the proof for arbitrary $ k > 2$.

First we define a particular graph $G$, of treewidth $k$, called a
$(k,h)$-snowflake, where $h$, a function of $k$ and $c$, denotes the
\emph{height} of the snowflake.  We will show that no spanning
subgraph (of $G$) having treewidth $k-1$ can be an additive
$c$-spanner for $G$.  For the proof we will assume the existence of an
arbitrary but fixed subgraph $T$ with treewidth at most $k-1$ that
$c$-spans $G$ and prove that there is always an edge $u v$ in $G
\setminus T$ where the shortest path between $u$ and $v$ in $T$ is
greater than $c$, thus implying that $T$ is not a $c$-spanner for $G$.
For an edge $uv$ in $G\setminus T$ we define its \emph{deficiency}
(denoted $\deficiency (uv)$) to be $\dist_{T}(u,v) - 1$.  Any edge in
$G\setminus T$ with deficiency greater than $c$ implies that $T$ is
not a $c$-spanner.


A snowflake is a special case of a $k$-tree, a well-known family of
graphs of treewidth $k$, given by the following recursive definition:

\begin{definition}\label{def:k-tree}
  For integer $k \ge 1$, a graph is a \emph{$k$-tree} if and only if
  it can be constructed in the following way:
  \begin{enumerate}[1)]
  \item~\label{item:1} A $(k+1)$-clique $K_B$ is a $k$-tree.
  \item If $G$ is a $k$-tree and $K$ is a $k$-clique
    of $G$, then adding a vertex $z$ to $G$ together with all edges
    between $z$ and the vertices of $K$ is again a $k$-tree.
  \end{enumerate}
\end{definition}
Note that this definition varies slightly from the standard definition
where the initial clique is a $k$-clique rather than a $(k+1)$-clique.
Our definition does not allow a $k$-clique itself to be considered a
$k$-tree, which is of no consequence for our arguments.
We call the initial $(k+1)$-clique, $K_B$, the \emph{base clique} of
$G$ and assume that $K_B$ is on vertices $\{v_1, v_2, \cdots,
v_{k+1}\}$ and that the rest of the vertices of $G$ are numbered
$v_{k+2}, v_{k+3}, \cdots, v_n$ in the order in which they are added
to $G$.

\begin{claim}
  For all $c \ge 1$, there is a graph $G_c$ of treewidth 2 such that
  no spanning tree of $G_c$ is a $c$-spanner of $G_c$.
\end{claim}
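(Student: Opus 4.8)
The plan is to realize $G_c$ as a $(2,h)$-snowflake and to argue by a deficiency-propagation induction on the height $h$, with $h$ chosen linearly in $c$. First I would pin the snowflake down as a concrete $2$-tree: start from the base triangle $K_B=v_1v_2v_3$, and recursively attach to every current boundary edge $uv$ a new apex vertex $w$ adjacent to both $u$ and $v$ (creating the triangle $uvw$ together with two new boundary edges $uw$, $vw$), repeating to depth $h$. By Definition~\ref{def:k-tree} this is a $2$-tree, hence of treewidth exactly $2$; its crucial structural feature is that the sub-snowflake $S(uv,h')$ hanging off any edge $uv$ meets the rest of $G$ only in the two vertices $u,v$ (the $2$-separator property of $2$-trees). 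I would then fix an arbitrary spanning tree $T$, assume it $c$-spans $G_c$, and derive a contradiction by exhibiting an edge $uv\in G\setminus T$ with $\deficiency(uv)=\dist_T(u,v)-1>c$.

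The heart is a recursive descent that produces such an edge. Since $T$ is acyclic, in every triangle $uvw$ at least one edge is absent from $T$, and I would track a missing edge together with the length of the $T$-path that reconnects it. Starting from a base-triangle edge omitted by $T$ (one exists, since $T$ cannot contain all three edges of $K_B$), I would prove by induction on the remaining height a statement of the form: either the current missing edge already has deficiency exceeding $c$, or the descent continues into one of the two child sub-snowflakes with the reconnecting path length increased by a fixed amount. Two cases drive the step. If one of the child root edges $uw$, $vw$ is itself absent from $T$, I descend into that child directly. If instead both apex edges $uw,vw$ lie in $T$ — the case in which $uv$ reconnects cheaply as the path $u$--$w$--$v$ — then any short reconnection of a \emph{deeper} missing edge is forced to run through the boundary vertices, and acyclicity of $T$ turns a would-be short route into a cycle unless a deeper edge is left out of $T$; that deeper edge carries the increased deficiency, and the descent continues there.

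The step I expect to be the main obstacle is ruling out \emph{cheap external reroutes}. Taken in isolation, a single chain of nested triangles admits a spanning tree of bounded deficiency (one may keep both apex edges of every triangle and drop only the base edges), so the growth cannot come from one branch alone; it must be extracted from the global acyclicity of $T$. This is exactly where the branching and the $2$-separator property enter: any $T$-path that reconnects a missing edge by leaving $S(uv,h')$ must re-enter through the boundary pair $\{u,v\}$, and I would use the two disjoint child branches together with a pigeonhole choice to always continue into a child whose reconnection is \emph{not} short-circuited from outside, guaranteeing progress at every level. Making this ``the outside cannot cheaply help both children'' argument precise — and carefully bookkeeping which triangle edges acyclicity forces out at each stage — is the technical core of the proof.

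Finally, I would take $h$ large enough that the per-level increment of the descent accumulates past $c$ (a value of $h$ linear in $c$ suffices) and run the descent from the omitted base-triangle edge. The resulting missing edge $uv$ then satisfies $\deficiency(uv)>c$, so $T$ is not a $c$-spanner of $G_c$; since the snowflake has treewidth $2$, this establishes the claim. The same $2$-separator-plus-branching mechanism is what I would later lift from triangles to $(k+1)$-cliques to obtain the statement for arbitrary $k\ge 2$.
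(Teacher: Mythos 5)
Your construction is exactly the paper's ($G_c$ is the depth-$h$ snowflake $2$-tree), and you have correctly located where the difficulty lies, but the proof as proposed has a genuine gap: the inductive invariant you track --- ``a missing edge of $T$ together with the length of its reconnecting $T$-path'' --- is too weak to propagate, and the step that would repair it is precisely the part you defer as ``the technical core.'' Your own observation shows why it is too weak: if both apex edges $uw$, $wv$ lie in $T$, then the missing edge $uv$ has deficiency exactly $1$ no matter how much deficiency was accumulated above it, so the hope that ``a deeper edge carries the increased deficiency'' is not something acyclicity hands you locally. Your base case (some edge of $K_B$ is absent from $T$) suffers the same defect: the absent base edge may be reconnected in two steps through its own apex, and the descent never gets started.

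What the paper does instead --- and what your write-up is missing --- is to strengthen the invariant from ``edge $uv$ is missing'' to a \emph{path property}: the branch $S$ hanging off $uv$ contains no $u$-$v$ path in $T$ restricted to $S$. This is established at $K_B$ by applying acyclicity to the three internally disjoint branches $S_1,S_2,S_3$ (a $v_2$-$v_3$ path in $S_1$, a $v_3$-$v_1$ path in $S_2$ and a $v_1$-$v_2$ path in $S_3$ would close a cycle in $T$), which is strictly stronger than merely finding a missing edge of $K_B$. The invariant then self-propagates: if $T$ restricted to the child branch $S'$ (off $uw$) had a $u$-$w$ path and $T$ restricted to $S''$ (off $wv$) had a $w$-$v$ path, their concatenation would be a $u$-$v$ path inside $S$, a contradiction; so without loss of generality $S'$ inherits the property. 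And the same invariant yields the increment via the $2$-separator argument you state: any $T$-path from $u$ to $w$ must leave $S'$, can only reach $w$ from the $S''$/$v$ side, hence passes through $v$ and has length at least $\dist_T(u,v)+1$. Note that under this invariant your problematic case ``both $uw,wv\in T$'' simply cannot occur, which is exactly why the paper never needs the ``outside cannot cheaply help both children'' argument you flag as the obstacle. Your plan names all the right ingredients (the separator property, global acyclicity, the two disjoint children), but the invariant that makes them interlock is absent, and without it the induction does not go through.
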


\begin{proof}[Proof (sketch)]
  Consider the $2$-tree $G_c$ constructed as follows, where $n = 3
  \cdot 2^{c-1}$: $v_4, v_5, v_6$ each form a triangle with one of the
  $3$ edges of $K_B$, thereby adding $6$ new edges; $v_7, \cdots,
  v_{12}$ each form a triangle with one of these $6$ new edges,
  thereby adding $12$ new edges; etc.

  Note that for $c = 1$, $G_c$ is a triangle and the claim is
  trivially true; we assume $c >1$.  By the construction of $G_c$,
  each edge $e$ of $K_B$ is a separator of $G_c$.  Note that in the
  construction of $G_c$ one of the vertices $v_4, v_5,$ or $v_6$ was
  added as the vertex that forms a triangle with $e$; without loss of
  generality assume that $v_4$ is this vertex and $e$ is the edge $v_2
  v_3$.  We let $S_1$ denote the induced subgraph of $G_c$ formed on
  the connected component of $G_c \setminus \{v_2, v_3\}$ that
  contains $v_4$ together with the edge $v_2 v_3$.  Note that $v_1$ is
  not in $S_1$.  In a similar way we define the subgraphs $S_2$ and
  $S_3$, where $S_2$ does not contain $v_2$ and $S_3$ does not contain
  $v_3$.

  Now consider any spanning tree $T$ of $G_c$.  To avoid a cycle in
  $T$ it cannot contain both a $v_2$ to $v_3$ path in
  $S_1$ and a $v_1$ to $v_3$ path in $S_2$ and a $v_2$ to $v_1$ path
  in $S_3$.  Without loss of generality, assume that there is no $v_2$
  to $v_3$ path in $S_1$.  Now the edge $v_2 v_3$ has deficiency at
  least 1.  Note that $S_1$ with edge $v_2 v_3$ removed consists of
  $S_4$ that contains edge $v_2 v_4$ and $S_5$ that contains edge $v_3
  v_4$.  To avoid $S_1$ having a $v_2$ to $v_3$ path either there is
  no $v_2 v_4$ path in $S_4$ or no $v_3 v_4$ path in $S_5$; without
  loss of generality assume the former.  Now the deficiency of the
  edge $v_2 v_4$ is at least one bigger than the deficiency of the
  edge $v_2 v_3$.  A simple induction argument completes the proof.
\end{proof}

For higher values of $k$ the identification of edges of $G \setminus
T$ that result in a higher deficiency is more complicated since we
have to follow a path of cliques to a clique that contains an edge of
increased deficiency; the length of this path increases with $k$ and
the current lower bound on the deficiency.

To aid the reader, we divide the rest of the section into the
following three subsections:
\begin{enumerate}
\item The $(k,h)$-snowflake and terminology

\item Showing the existence of an edge with deficiency at least 1

\item Showing the existence of an edge of arbitrarily high deficiency
\end{enumerate}

\subsection{The $(k,h)$-snowflake and terminology}\label{term}

First we examine the \emph{construction tree $\T$ of
  $k$-tree $G$}, a notion that follows naturally
from the recursive definition of $k$-trees.  The nodes of the
construction tree $\T$ consist of the root node $K_B$ and $n-k-1$
nodes corresponding to the $(k+1)$-cliques created by the addition of
$v_i, \: k+2 \le i \le n$; we let node $v_i$ in $\T$ represent the
$(k+1)$-clique created with the inclusion of the vertex $v_i$.
Furthermore, let $p(i)$ be the largest (according to the numbering of
the vertices) vertex in the $k$-clique to which $v_i$ is connected on
introduction to $G$.  Now in $\T$, $v_i$ is a child of $p(i)$; note
that if $p(i) = k$ or $k+1$, then $v_i$ is a child of $K_B$.
For $K$, a $(k+1)$-clique of $G$, $ K \neq K_B$ , we let $v_K$ denote
the vertex in $K$ of highest index and use $\T(K)$ to refer to the
subtree of the construction tree $\T$ that is rooted at node $v_K$;
$G[\T(K)]$ (respectively $T[\T(K)]$) refers to the
corresponding induced subgraph of $G$ (respectively
  $T$).  Furthermore, in $G$, $K$ is a separator between non-$K$
vertices introduced in $\T \setminus \T(K)$ and non-$K$ vertices
introduced in $\T(K)$.  Note that for each rooted tree $\T$, one can
easily construct a $k$-tree $G$ such that $\T$ is the construction
tree of $G$; however, $G$ is not in general uniquely determined.  In
$\T$, if $v_{K'} = v_i$ is a child of $v_K$, then there is a unique
vertex $x \in K \setminus K'$ and we refer to $v_i$ and $K'$ as a
\emph{$-x$~child of $K$}.  In the $k$-trees considered in our
arguments, $K_B$ will have $\lceil k/2 \rceil$ $-x$ children for each
$x \in K_B$ whereas all other $(k+1)$-cliques $K$ will have either 1
or 0 $-x$ children for each $x \in K$.






\begin{figure}
  \begin{center}
    \begin{minipage}{0.4\linewidth}
      \includegraphics[scale=0.35]{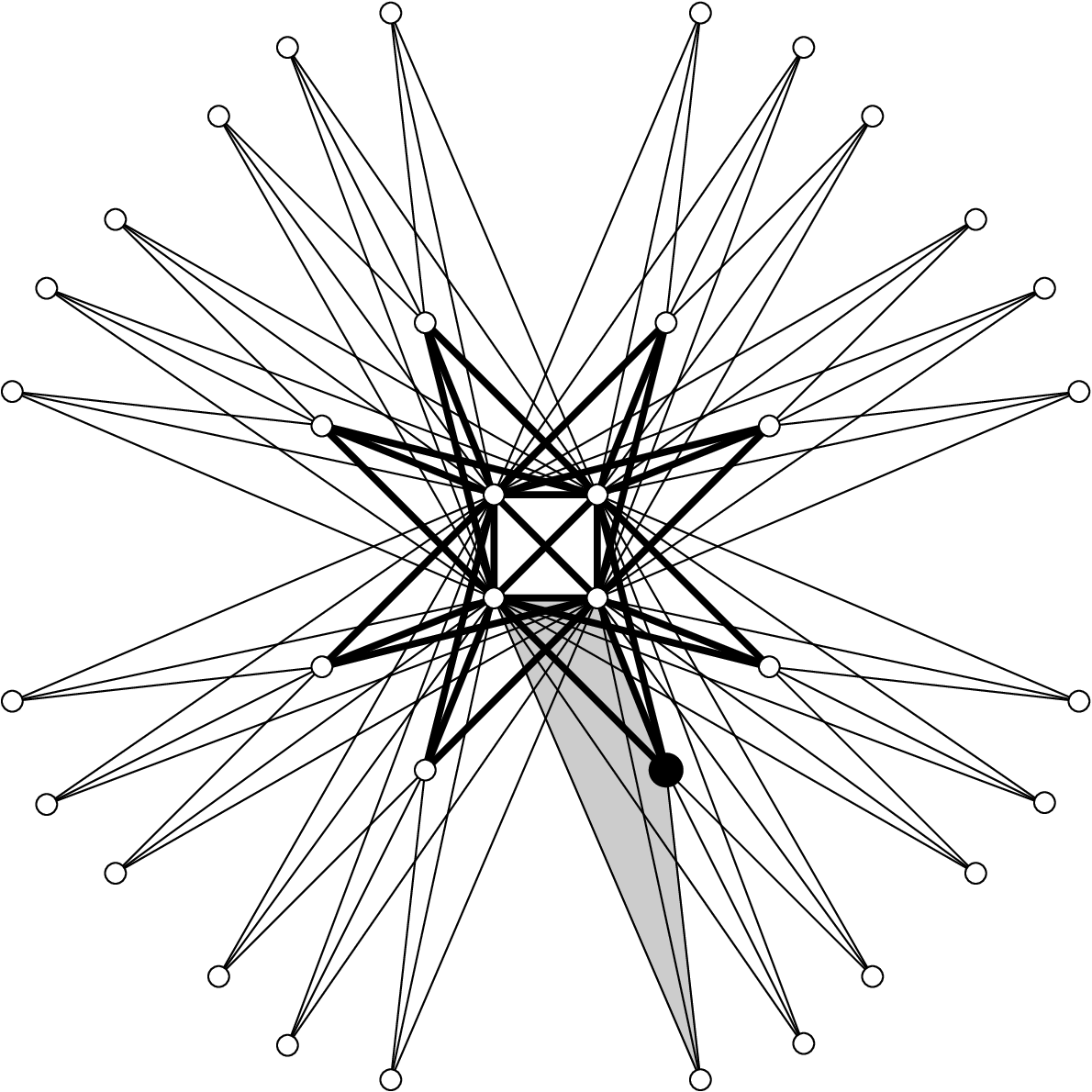}
    \end{minipage}
    \hspace{0.1\linewidth}
    \begin{minipage}{0.4\linewidth}
      \psfrag{KB}{$K_B$}%
      \includegraphics[scale=0.35]{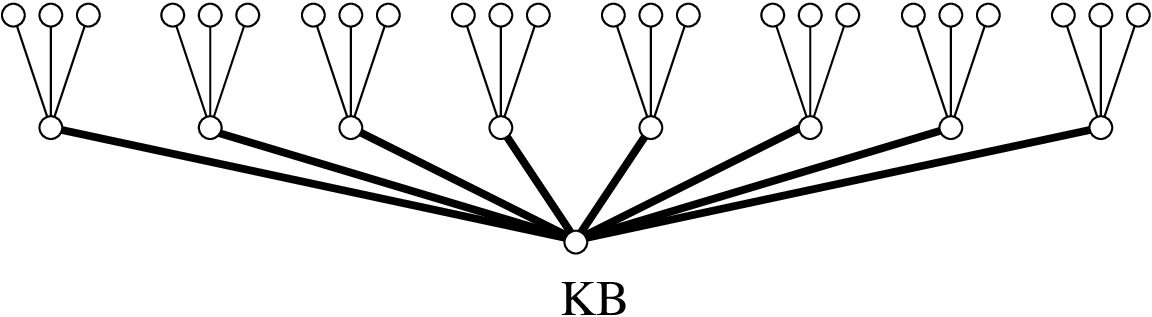}
    \end{minipage}
    \caption{A $(3,1)$-snowflake (bold edges) is shown that is
      contained in a $(3,2)$-snowflake together with the construction
      tree of the $(3,1)$-snowflake (bold edges) contained in the
      construction tree of the $(3,2)$-snowflake.  The corner vertices
      of the shaded area form a clique $K$ with the black vertex being
      the earliest vertex of
      $K$.}
    \label{fig:4-tree-together}
  \end{center}
\end{figure}

Our argument will be based on a specific $k$-tree, namely a
\emph{$(k,h)$-snowflake}, $k \ge 3, h \ge 0$, denoted $G = S(k,h)$ or
simply $G$, if there is no chance for ambiguity.  Note that
the parameter $h$ is the \emph{height} of the
snowflake.  The base $(k+1)$-clique $K_B$ is the $S(k,0)$-snowflake
and $K_B$ is at height 0. $S(k,1)$ consists of $S(k,0)$ as well as
$\lceil k/2 \rceil$ $-v$ children (all of which will be called a $-v$
child of $K_B$) for each $v \in K_B$.  This is done to ensure that
$K_B$ has at least ${k+1 \choose 2}$ children, i.e.,
at least one child for each pair of distinct vertices of $K_B$.  For
$1<i\le h$, the $S(k,i)$-snowflake is constructed from the
$S(k,i-1)$-snowflake in the following way.  For every $(k+1)$-clique
$K$ (where $v_j$ is the vertex whose introduction created $K$) at
height $i-1$ and every vertex $v \neq v_j$ in $K$ we construct exactly
one $-v$ child of $K$ and each of these children is at height $i$.
Thus in the construction trees of this restricted family of $k$-trees,
the root $K_B$ has $(k+1) \cdot \lceil k/2 \rceil$ children, each
vertex at height between $1$ and $h-1$ inclusive has exactly $k$
children and each vertex at height $h$ has no children.  See
Figure~\ref{fig:4-tree-together} for an example of a $(3,1)$- and a
$(3,2)$-snowflake together with its construction tree.


Note for an arbitrary $(k,h)$-snowflake,
given a $(k+1)$-clique $K$ at height at least $k-1$ we refer to the
\emph{earliest vertex} of $K$ as the non-$K_B$ vertex of $K$ that was
introduced at the lowest height in the snowflake.

\subsection{Showing the existence of an edge with deficiency at least
  1}\label{base-case}

To show the existence of an edge with deficiency at least 1, we have
to describe a chain of cliques in $\T$ starting at $K_B$ and ending at
an ``$x,y$-successor of $K_B$'' where $x,y$ is an arbitrary pair of
vertices in $K_B$.  Subsequent examination of the set of all
$u,v$-successors of $K_B$ (where $u, v \in K_B$) will show that for
some $x, y$ in $K_B$ the edge $xy$ is in $G \setminus T$ and has
deficiency at least 1.

For the base $(k+1)$-clique $K_B$ of $(k,h)$-snowflake $G$ (of
sufficient height $h$) with vertices $\{a_1, a_2, \dots, a_{k-1}, x, y
\}$, an \emph{$x,y$-successor of $K_B$} is any $(k+1)$-clique in
$\T(K_B)$ determined as follows.  Let $\pi$ be a permutation of $\{1,
2, \dots, k-1\}$.  $K^{\pi}_0$ is defined to be $K_B$ and $K^{\pi}_1$
is defined to be the $k+1$-clique created from $K_B$ by replacing
vertex $a_{\pi(1)}$ with one of the $\lceil k/2 \rceil$ $-a_{\pi(1)}$
children.  For all $1 < i \le k-1$, $K^{\pi}_i$ is defined to be the
$(k+1)$-clique created from $K^{\pi}_{i-1}$ by replacing vertex
$a_{\pi(i)}$ with new vertex $-a_{\pi(i)}$.  $K^{\pi}_{k-1}$ is an
\emph{$x,y$-successor of $K_B$} and is denoted by $K^{\pi}$.  Now
suppose we take another pair of vertices $u,v$ in $K_B$ and choose
another permutation $\pi'$ for this pair.  If the first elements of
$\pi$ and $\pi'$ are different, or the first elements are the same but
different $-a_{\pi(1)}$ children are chosen, then $K^{\pi} \cap
K^{\pi'} = \{x,y\} \cap \{u,v\}$.  Furthermore, for such a pair of
permutations, any $x,y$-path in $G[\T(K^{\pi})]$
is internally disjoint from any $u,v$-path in $G[\T(K^{\pi'})]$.
For any such permutation $\pi$, for a pair of vertices $x,y$ in $K_B$,
the $x,y$-successor $K^{\pi}$ of $K_B$ defines a subgraph
$G[\T(K^{\pi})]$ that we call an \emph{$x y$-branch of $K_B$}.

In the following, we assume that we are dealing with a
$(k,h)$-snowflake for sufficiently large $h$.  Now suppose there is a
spanner $T$ of $G$ having treewidth less
than or equal to $k-1$.

\begin{fact}\label{fact:1}
  Consider the $(k+1)$-clique $K_B$ of a $(k,h)$-snowflake for
  sufficiently large $h$.  There exists a pair of vertices $x, y$ of
  $K_B$ and an $x y$-branch of $K_B$ that does not contain an
  $x,y$-path in $T$.  Thus the edge $xy$ is in $G \setminus T$ and has
  deficiency at least 1.
\end{fact}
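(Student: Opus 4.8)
The plan is to prove Fact~\ref{fact:1} by contradiction, using the bound on the treewidth of $T$ together with the internal disjointness of branches established above. First I would set up a family of pairwise internally disjoint branches, one for each of the $\binom{k+1}{2}$ pairs of vertices of $K_B$. For a pair $\{u,v\}$, every $u,v$-branch begins by replacing one of the $k-1$ vertices of $K_B\setminus\{u,v\}$, i.e.\ by passing to some $-w$ child of $K_B$ with $w\notin\{u,v\}$; by the disjointness property, two branches are internally disjoint as soon as their first-step children of $K_B$ differ. Since $K_B$ carries $\lceil k/2\rceil$ $-w$ children for each of its $k+1$ vertices, there are $(k+1)\lceil k/2\rceil\ge\binom{k+1}{2}$ children in total, and a routine balanced-assignment (Hall-type) argument lets me pick, for each pair, a distinct first-step child $-w$ with $w\notin\{u,v\}$. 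This produces $\binom{k+1}{2}$ branches that pairwise meet $K_B$ only in the relevant pair and whose internal vertices lie in disjoint subtrees of $\T$.

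Next I would assume, for contradiction, that \emph{every} pair $\{u,v\}$ has its chosen branch containing a $u,v$-path in $T$. Each such path has all of its internal vertices outside $K_B$ (the chosen successor meets $K_B$ exactly in $\{u,v\}$), and by the disjointness of the branches these paths are pairwise internally disjoint. Their union is therefore a subdivision of $K_{k+1}$ sitting inside $T$, with the vertices of $K_B$ as branch vertices. Since treewidth is minor-monotone and the treewidth of $K_{k+1}$ equals $k$, this would force the treewidth of $T$ to be at least $k$, contradicting the hypothesis that $T$ has treewidth at most $k-1$.

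Hence some pair $x,y$ of $K_B$ admits a chosen $xy$-branch containing no $x,y$-path in $T$. Because the edge $xy$ itself lies inside this branch (both endpoints belong to $K^{\pi}$, and the branch is an induced subgraph of $G$), the edge $xy$ cannot be in $T$: otherwise it would already be an $x,y$-path of the branch in $T$. Thus $xy\in G\setminus T$, and since $x$ and $y$ are non-adjacent in the connected graph $T$ we obtain $\deficiency(xy)=\dist_T(x,y)-1\ge 1$, as claimed.

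The main obstacle, and the only place where any real care is needed, is the first step: guaranteeing that the $\binom{k+1}{2}$ branches can be chosen simultaneously pairwise internally disjoint. This is exactly what the multiplicity $\lceil k/2\rceil$ of children per vertex of $K_B$ is engineered for---it makes the supply of first-step children large enough, and balanced enough against the forbidden-vertex constraint $w\notin\{u,v\}$, for the distinct-representatives selection to go through. Once the disjoint branches are in hand, the subdivision/minor argument is immediate.
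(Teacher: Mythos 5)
Your proposal is correct and follows essentially the same route as the paper's own proof: choose, for each of the $\binom{k+1}{2}$ pairs of $K_B$, a branch whose first-step child is distinct (the paper does this via permutations $\pi_{i,j}$ with at most $\lceil k/2\rceil$ starting at any one vertex, which is exactly your balanced-assignment step made explicit), then derive a $K_{k+1}$ minor/subdivision in $T$ if every branch carried a path, contradicting treewidth $k-1$. The only difference is presentational --- you name the child-selection step as a Hall-type argument where the paper simply asserts the assignment --- so no further comment is needed.
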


\begin{proof}
  Suppose $K_B$ consists of vertices $\{a_1, a_2, \cdots a_{k+1}\}$.
  Consider all ${k+1 \choose 2}$ pairs of vertices, $a_i, a_j,\: 1 \le
  i < j \le k+1$.
  For each such pair, we identify a permutation $\pi_{i,j}$ of the
  vertices of $K_B \setminus \{a_i,a_j\}$ where the first entry of the
  permutation is $a_\ell, \:\ell \notin \{i,j\}$ and no more than
  $\lceil k / 2 \rceil$ of the chosen permutations start at $a_\ell$.
  Now each of these at most $\lceil k / 2 \rceil$ permutations whose
  child of $K_B$ will be a $-a_\ell$ vertex, chooses a unique
  $-a_\ell$ child from the $\lceil k / 2 \rceil$ children available.
  Doing so, we continue and arrive at an $a_i, a_j$-successor of
  $K_{B}$.  Now suppose that each of the
  corresponding ${k+1 \choose 2}$ branches $\T(K^{\pi_{i,j}}), 1 \le i
  < j \le k+1$ contains an $a_i, a_j$-path in $T$.  By the choice of
  branches, these paths are pairwise internally disjoint.
  Thus $T$ contains a $K_{k+1}$ minor, contradicting $T$ having
  treewidth $k-1$.
  

  
  Therefore there exists a branch $\T(K^{\pi_{i,j}})$ (where $a_i = x$
  and $a_j = y$) such that $T$ restricted to this branch has no
  $x,y$-path, and in particular edge $x y$ is
  not in $T$ and thus has deficiency at least 1.
  
  \end{proof}

\subsection{Showing the existence of an edge of arbitrarily high
  deficiency}


%

We now show how to construct a chain of cliques from $K^{\pi}$, an
$x,y$-successor of $K_B$ to another clique where there is an edge with
its deficiency at least one more than the deficiency of $xy$ in
$K^{\pi}$.

A $(k+1)$-clique $K$ containing vertices
$x,y$ is said to satisfy the
\emph{$x,y$-path property}
if $T[\T(K)]$ does not contain an $x,y$-path.  In the following, as
guaranteed by Fact~\ref{fact:1}, we will assume that $K^{\pi}$ is an
$x,y$-successor of $K_B$ that satisfies the $x,y$-path property.
Based on the existence of $K^{\pi}$, we will identify a sufficiently
long chain of $(k+1)$-cliques starting at $K^{\pi}$.  The following
definition shows how to ``build'' such a chain off an arbitrary
$(k+1)$-clique $K$ containing $x,y$ that satisfies
the $x,y$-path property.


\begin{definition}\label{chain}
  Let $K$ be a $(k+1)$-clique of height at least $k-1$, such that $K$
  satisfies the $x,y$-path property for $x,y \in K$.  An \emph{$x,y$
    chain of cliques starting from $K$, for arbitrary $i \ge 0$} is
  defined as follows:
  \begin{eqnarray*}
    K^0 &=& K\\
    K^i &=& -u\text{ child of $K^{i-1}$ where $u$ is the earliest introduced vertex}\\
    & &\text{ (with respect to $\T$) in $K^{i-1} \setminus\{x, y\}$, for $i \ge 1$.}
  \end{eqnarray*}
  The new vertex created for $K^i$ ($i \ge 1$) is denoted as $z_i$.
  See Figure~\ref{fig:fact2} for an $x,y$ chain of cliques built off
  $K^{\pi}$, an $x, y$-successor of $K_B$.
\end{definition}

\begin{fact}\label{fact:2}
  Let $z_i$ be defined as above.  Then $\dist_{G\setminus\{x,y\}}(z_i,
  K\setminus\{x,y\}) = 1 + \left\lfloor \frac{i-1}{k-2}
  \right\rfloor$.
\end{fact}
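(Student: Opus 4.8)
The plan is to prove Fact~\ref{fact:2} by making the $x,y$ chain fully explicit, reducing the distance to a one-dimensional recursion along the chain, and then solving that recursion by induction on $i$; throughout I assume, as in the preceding discussion, that $h$ is large enough for the chain to be defined up to index $i$. First I would fix notation by indexing the non-$\{x,y\}$ vertices of the chain cliques in order of introduction: write $S_j:=K^j\setminus\{x,y\}$, let $K\setminus\{x,y\}=\{q_1,\dots,q_{k-1}\}$ (so $S_0=\{q_1,\dots,q_{k-1}\}$), and set $q_{k-1+i}:=z_i$ for $i\ge 1$. Since the chain always deletes the earliest-introduced vertex of $K^{i-1}\setminus\{x,y\}$ and inserts $z_i$, the window $S_i$ behaves like a FIFO queue, and a one-line induction gives
\[
  S_i=\{q_{i+1},q_{i+2},\dots,q_{i+k-1}\},\qquad z_i=q_{k-1+i},
\]
with the vertex deleted when $K^i$ is created being exactly $q_i$. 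In a $k$-tree the new vertex $z_i$ is joined precisely to the $k$-clique $K^{i-1}\setminus\{q_i\}=\{x,y\}\cup\{q_{i+1},\dots,q_{i+k-2}\}$, so in $G\setminus\{x,y\}$ the vertex $z_i=q_{k-1+i}$ is adjacent exactly to its $k-2$ predecessors $q_{i+1},\dots,q_{i+k-2}$, together with vertices introduced later inside $\T(K^i)$. I would record two consequences: every edge of $G\setminus\{x,y\}$ among the $q$'s joins indices that differ by at most $k-2$ (the base clique makes $q_1,\dots,q_{k-1}$ mutually adjacent, and each later $q_m$ reaches back exactly $k-2$ positions), and, setting $f(m):=\dist_{G\setminus\{x,y\}}(q_m,S_0)$, one has $f(m)=0$ for $1\le m\le k-1$.

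Second I would establish the local recursion
\[
  f(k-1+i)=1+\min_{1\le j\le k-2} f(i+j)\qquad(i\ge 1).
\]
The inequality ``$\le$'' is immediate from the edge $z_iq_{i+j}$. For ``$\ge$'' I would invoke the separator property recalled in Subsection~\ref{term}: the clique $K^i$ separates the vertices introduced inside $\T(K^i)$ from the rest of $G$, so after deleting $x,y$ the set $S_i$ separates the interior of $\T(K^i)$ from $S_0$. Hence a shortest $z_i$--$S_0$ path either begins with an edge to a predecessor $q_{i+j}$ ($1\le j\le k-2$), or first descends into the interior of $\T(K^i)$ and must later re-enter $S_i\setminus\{z_i\}=\{q_{i+1},\dots,q_{i+k-2}\}$; the latter spends at least two steps before reaching a predecessor and so cannot beat the direct edge. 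This is the one genuinely non-routine point, and I expect it to be the main obstacle: one must argue carefully that no shortest path shortcuts out of the chain, which is exactly where the clique-separator structure of the snowflake is used.

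Finally I would solve the recursion by induction on $i$, proving $f(k-1+i)=1+\lfloor\frac{i-1}{k-2}\rfloor$. For $1\le i\le k-2$ the predecessor $q_{i+1}$ has index $i+1\le k-1$ and so lies in $S_0$, giving $f(k-1+i)=1$, which agrees with $1+\lfloor\frac{i-1}{k-2}\rfloor=1$. For $i\ge k-1$ every predecessor $q_{i+1},\dots,q_{i+k-2}$ is itself a chain vertex $z_{i-k+2},\dots,z_{i-1}$; since the induction hypothesis makes $f$ nondecreasing in the index, the minimum in the recursion is attained at $q_{i+1}=z_{i-k+2}$, whence
\[
  f(k-1+i)=1+\Big(1+\big\lfloor\tfrac{i-k+1}{k-2}\big\rfloor\Big)=1+\big\lfloor\tfrac{i-1}{k-2}\big\rfloor,
\]
using $\lfloor\frac{(i-1)-(k-2)}{k-2}\rfloor=\lfloor\frac{i-1}{k-2}\rfloor-1$. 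Since $z_i=q_{k-1+i}$ and $K\setminus\{x,y\}=S_0$, this is precisely the claimed value $\dist_{G\setminus\{x,y\}}(z_i,K\setminus\{x,y\})=1+\lfloor\frac{i-1}{k-2}\rfloor$.
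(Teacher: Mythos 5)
Your proof is correct and takes essentially the same route as the paper's: both rest on the chain adjacency structure (vertices of the chain are adjacent exactly when their indices differ by at most $k-2$), the fact that a path cannot profitably leave the chain, and an induction yielding $1+\left\lfloor \frac{i-1}{k-2}\right\rfloor$. The only difference is one of rigor, not of method: your FIFO reindexing and the explicit recursion $f(k-1+i)=1+\min_{1\le j\le k-2}f(i+j)$, justified via the clique-separator property, carefully establish the step the paper dispatches with the phrase ``by the tree-like structure of $G$.''
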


\begin{proof}
  By the tree-like structure of $G$,
  any internal vertices of a shortest path from $z_i$ to
  $K\setminus\{x, y\}$ in $G\setminus\{x, y\}$
  has to consist entirely of $z_j$ vertices of this chain of cliques
  leading from $K$ to $z_i$
  with $0 < j < i$.  Any consecutive set of $k-1$ such $z$ vertices
  forms a clique and there are no edges between vertices $z_{j_1}$ and
  $z_{j_2}$ with $|j_2 - j_1| > k-2$.

  By construction, vertices $z_1$ to $z_{k-2}$ have neighbors in
  $K\setminus \{x,y\}$ and thus have distance $1$ to $K\setminus\{x,
  y\}$.
  Similarly, vertices $z_{k-1}$ to $z_{2(k-2)}$ have distance $2$ to
  $K\setminus\{x, y\}$.  Extending this argument inductively shows
  that vertices $z_{(\ell -1)(k-2)+1}$ to $z_{\ell(k-2)}$ have
  distance $\ell$ to $K\setminus\{x, y\}$, implying that vertex $z_i$
  has distance $1 + \left\lfloor \frac{i-1}{k-2} \right\rfloor$ to
  $K\setminus\{x, y\}$ in $G\setminus\{x,y\}$.
\end{proof}



\begin{figure}
 \begin{center}
   \psfrag{ppp}{\scriptsize$\dots$}%
   \psfrag{K}{\scriptsize$K_{B}\setminus\{x, y\}$}%
   \psfrag{Kpi}{\scriptsize$K^{\pi}\setminus\{x, y\}$}%
   \psfrag{K0}{\scriptsize$=K^0\setminus \{x, y\}$}%
   \psfrag{Ka}{\scriptsize$K^{\alpha}\setminus\{x, y\}$}%
   \psfrag{x}{\scriptsize$x$}%
   \psfrag{y}{\scriptsize$y$}%
   \psfrag{a1}{\scriptsize$a_1$}%
   \psfrag{a2}{\scriptsize$a_2$}%
   \psfrag{ak-1}{\scriptsize$a_{k-1}$}%
   \psfrag{-a1}{\scriptsize$-a_1$}%
   \psfrag{-a2}{\scriptsize$-a_2$}%
   \psfrag{-ak-1}{\scriptsize$-a_{k-1}$}%
   \psfrag{z0}{\scriptsize$-a_{k-1}$}%
   \psfrag{z1}{\scriptsize$z_1$}%
   \psfrag{z2}{\scriptsize$z_2$}%
   \psfrag{zi}{\scriptsize$z_i$}%
   \psfrag{za}{\scriptsize$z_{\alpha}$}%
   \psfrag{zak2}{\scriptsize$z_{\alpha-k+2}$}%
   \includegraphics[width=14cm]{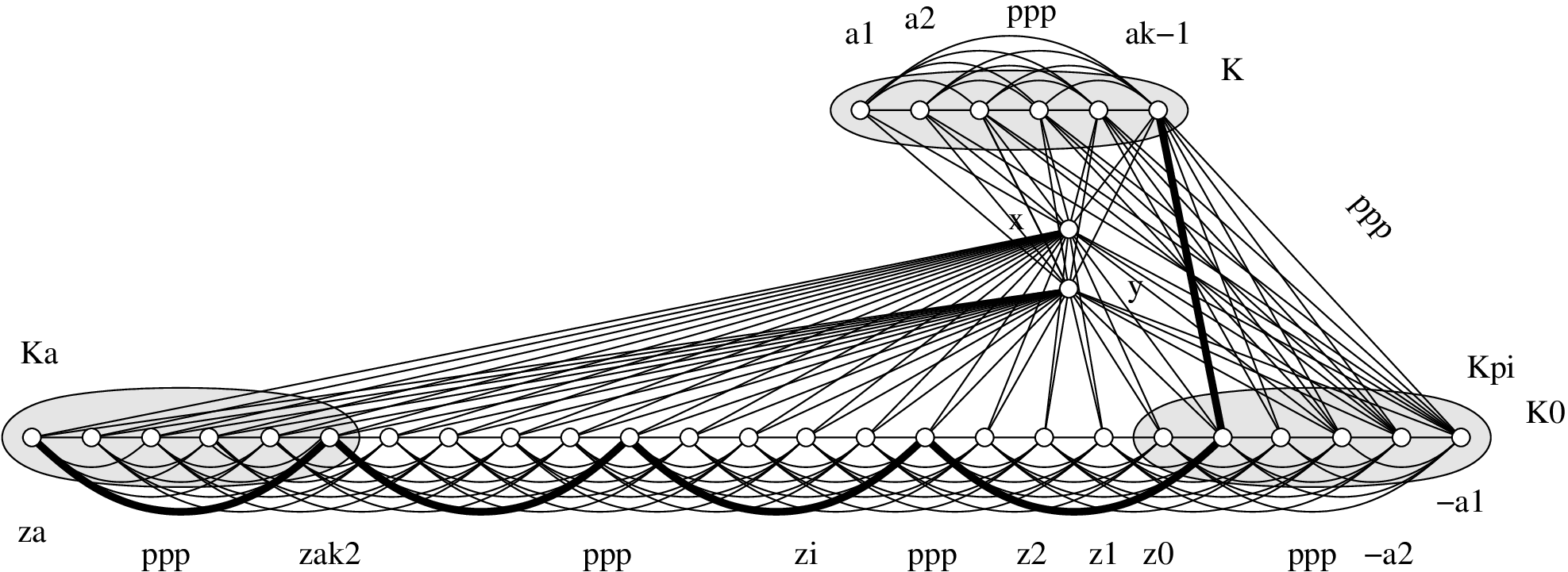}
 \end{center}
 \caption{The diagram shows a chain of cliques in a $7$-snowflake of
   sufficient height built off $K^{\pi}$, an $x,y$-successor of $K_B$.
   Further the diagram shows vertices $z_1, \dots, z_i, \dots
   z_{\alpha}$ as defined in Definition~\ref{chain} and as used in
   Fact~\ref{fact:2}, as well as a clique $K^{\alpha}$ used in
   Lemma~\ref{lem:1}.  For $z_{\alpha}$ the shortest path to
   $K_B\setminus\{x,y\}$ in
   $G\setminus\{x,y\}$ is depicted by bold edges.}
 \label{fig:fact2}
\end{figure}



\begin{lemma}\label{lem:1}
  Let $K$, at height at least $k-1$, be a $(k+1)$-clique containing
  $x,y$ that satisfies the $x,y$-path property, where
  $\deficiency(x,y) \geq C \geq 1$.
  Let $K^{\alpha}$ be the $(k+1)$-clique in the chain of cliques from
  $K$ (see Definition~\ref{chain}) where $\alpha = 1+(C-1)(k-2)$.
%
  Then there exists $w \in \{x, y\}$ such that $K^{\alpha}$
  (containing $z_{\alpha},w$) satisfies the $z_{\alpha}, w$-path
  property and $\deficiency(z_{\alpha}, w) \geq C+1$.
\end{lemma}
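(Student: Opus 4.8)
The plan is to prove the two assertions of the lemma separately: first that $K^{\alpha}$ satisfies the $z_{\alpha},w$-path property for a suitable $w\in\{x,y\}$ (which also shows the edge $z_{\alpha}w$ lies in $G\setminus T$, so that $\deficiency(z_{\alpha},w)$ is defined), and then that this deficiency is at least $C+1$. For the first part I would argue by concatenation. Since each $K^{i}$ in Definition~\ref{chain} is a child of $K^{i-1}$, the clique $K^{\alpha}$ is a descendant of $K$, so $\T(K^{\alpha})\subseteq\T(K)$ and hence $T[\T(K^{\alpha})]$ is a subgraph of $T[\T(K)]$; moreover $x,y\in K\cap K^{\alpha}$. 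If $T[\T(K^{\alpha})]$ contained both a $z_{\alpha},x$-path and a $z_{\alpha},y$-path, their union would contain an $x,y$-path lying inside $T[\T(K)]$, contradicting the $x,y$-path property of $K$. Thus at least one endpoint, which I take to be $w$, has no $z_{\alpha},w$-path in $T[\T(K^{\alpha})]$; in particular the clique edge $z_{\alpha}w$ is absent from $T$, so $z_{\alpha}w\in G\setminus T$.

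For the deficiency bound I must show $\dist_{T}(z_{\alpha},w)\ge C+2$. Fix a shortest $z_{\alpha},w$-path $P$ in $T$ and write $w'$ for the partner of $w$ in $\{x,y\}$. I split on whether $P$ meets $w'$. \textbf{Case A} ($w'\in P$): because $P$ is shortest, $\dist_{T}(z_{\alpha},w)=\dist_{T}(z_{\alpha},w')+\dist_{T}(w',w)$; since $\deficiency(x,y)\ge C$ gives $\dist_{T}(w',w)=\dist_{T}(x,y)\ge C+1$, and $\dist_{T}(z_{\alpha},w')\ge 1$, we obtain $\dist_{T}(z_{\alpha},w)\ge C+2$, as required.

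\textbf{Case B} ($P$ avoids $w'$) is the crux. Here the bound cannot come from $G$-distances, since in $G$ the vertex $z_{\alpha}$ is clique-adjacent to $w$ and to every chain vertex; it must instead be extracted from the absence of edges in $T$. The $z_{\alpha},w$-path property places $z_{\alpha}$ and $w$ in distinct components of $T[\T(K^{\alpha})]$, so $P$ leaves $G[\T(K^{\alpha})]$; as $z_{\alpha}$ has no neighbours outside its own subtree and $P$ avoids $w'$ and reaches $w$ only at its end, $P$ first exits through one of the chain vertices $z_{\alpha-k+2},\dots,z_{\alpha-1}$ of the separator $K^{\alpha}$. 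Crucially, $P$ cannot shortcut to $w$ along a clique edge $z_{j}w$ with $z_{j}\in K^{\alpha}$, for together with the clique edge $z_{\alpha}z_{j}$ this would reinstate an internal $z_{\alpha},w$-path; such edges are therefore missing from $T$. This forces $P$ to descend along the clique backbone of the chain. Reading the initial portion of $P$, up to its last vertex of $K\setminus\{x,y\}$, as a walk in $G\setminus\{x,y\}$ and applying Fact~\ref{fact:2} with $\alpha=1+(C-1)(k-2)$ --- for which $\dist_{G\setminus\{x,y\}}(z_{\alpha},K\setminus\{x,y\})=C$ --- this descent costs at least $C$ edges, and the final re-approach to $w$ must contribute the remaining edges needed to reach $C+2$.

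I expect \textbf{Case B} to be the main obstacle, and within it the delicate point is to account for the \emph{last} unit: the descent argument above, taken on its own, only yields $\dist_{T}(z_{\alpha},w)\ge C+1$, because a walk from $z_{\alpha}$ to $K\setminus\{x,y\}$ in $G\setminus\{x,y\}$ has length $\ge C$ by Fact~\ref{fact:2} and a single edge could in principle join $K\setminus\{x,y\}$ to $w$. Closing the gap to $C+2$ requires ruling out that cheap final hop --- showing that $P$ must re-approach $w$ from outside $G[\T(K)]$, or equivalently that the relevant clique edges incident to $w$ are excluded from $T$ --- using the path property together with the treewidth-$(k-1)$ separation of $T$ across the cliques of the chain. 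Organising this as an induction along $K^{\alpha},\dots,K^{0}=K$, with the length $\alpha$ tuned by Fact~\ref{fact:2}, is where the real work of the proof lies.
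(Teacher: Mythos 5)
Your proposal stalls exactly where you say it does, and the gap is real: Case~B as you set it up can only ever yield $\dist_T(z_{\alpha},w)\ge C+1$, and the missing unit is the whole content of the lemma. The idea you are missing is that the path property must be established one level up, for $K$ rather than for $K^{\alpha}$. The paper's proof argues: since $K$ satisfies the $x,y$-path property, $T[\T(K)]$ cannot contain both a $z_{\alpha},x$-path and a $z_{\alpha},y$-path (their union would contain an $x,y$-path inside $T[\T(K)]$); hence for some $w\in\{x,y\}$ there is \emph{no $z_{\alpha},w$-path in all of $T[\T(K)]$}, and this is then inherited by $K^{\alpha}$. With that stronger property, the shortest $T$-path $P$ from $z_{\alpha}$ to $w$ avoiding $w'$ must contain a vertex $u$ outside $G[\T(K)]$, not merely outside $G[\T(K^{\alpha})]$. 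Since $K$ separates $G[\T(K)]$ from the rest of $G$, and $P$ avoids $w'$ and meets $w$ only at its end, $P$ first meets $K\setminus\{x,y\}$ at some vertex $a_j$, and $u$ lies strictly after $a_j$ and strictly before $w$ on $P$. The prefix from $z_{\alpha}$ to $a_j$ avoids $x,y$ and so has length at least $C$ by Fact~\ref{fact:2} (this is how $\alpha=1+(C-1)(k-2)$ is tuned); the suffix from $a_j$ to $w$ contains $u\ne a_j,w$ and so has length at least $2$; total $\ge C+2$. By contrast, your first step proves only that no $z_{\alpha},w$-path exists in $T[\T(K^{\alpha})]$, and this genuinely cannot be repaired downstream: nothing you established forbids a $T$-path that leaves $G[\T(K^{\alpha})]$ through a chain vertex $z_j$, passes through a sibling subtree hanging off the chain (inside $\T(K)$ but outside $\T(K^{\alpha})$), and returns to $w$ within a constant number of edges. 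For $C\ge 2$ such a path would give $\deficiency(z_{\alpha},w)<C+1$, so the vertex $w$ selected by your $K^{\alpha}$-level criterion may simply be the wrong one of the pair; the correct $w$ can only be identified relative to $T[\T(K)]$.

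Two further flaws in Case~B. First, your exclusion of the edges $z_jw$, $z_j\in K^{\alpha}$, from $T$ is unjustified: you argue that $z_jw\in E_T$ ``together with the clique edge $z_{\alpha}z_j$'' would reinstate an internal $z_{\alpha},w$-path, but $z_{\alpha}z_j$ is an edge of $G$, not necessarily of $T$; the path property forbids paths in $T[\T(K^{\alpha})]$, so all you may conclude is that $z_jw$ and $z_{\alpha}z_j$ are not both in $E_T$, which does not force $P$ to ``descend along the backbone.'' Second, appealing to the treewidth-$(k-1)$ structure of $T$ to recover the last unit is a misdirection: the treewidth hypothesis is used only in Fact~\ref{fact:1} to create the initial deficiency-$1$ pair, and plays no role in this lemma, whose proof works for an arbitrary spanning subgraph $T$ satisfying the stated path and deficiency hypotheses.
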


\begin{proof}
  By the $x,y$-path property on $K$, $K$ must satisfy at least one of
  $z_{\alpha}, x$- and $z_{\alpha}, y$-path properties.  Without loss
  of generality suppose $K$ satisfies the $z_{\alpha}, x$-path
  property which implies $z_{\alpha} x \notin E_T$.  Thus, since
  $K^{\alpha}$ is in $\T(K)$ and contains $z_{\alpha}$ and $x$,
  $K^{\alpha}$ also satisfies the $z_{\alpha}, x$-path property.


  Now we just need to show that $\deficiency(z_{\alpha}, x) \geq C+1$.
  Note that there may or may not exist a $z_{\alpha},y$-path in
  $T[\T(K)]$; in particular it is possible that $z_{\alpha} y \in
  E_T$.

  We now look at the two ways of forming a shortest
  $z_{\alpha},x$-path in $T$.

  \textbf{Case 1:} \emph{The shortest $z_{\alpha},x$-path in $T$
    contains vertex $y$.}\\
  The subpath between $z_{\alpha}$ and $y$ has length $\geq 1$.  Since
  $\deficiency(x,y) \geq C$ the distance between $x$ and $y$ in $T$ is
  greater than $C$ and thus $\deficiency(z_{\alpha}, x) \geq C+1$ as
  required.

  \textbf{Case 2:} \emph{The shortest $z_{\alpha},x$-path in $T$ does not contain $y$.}\\
  Since $K$ separates $G[\T(K)]$ from the rest of the graph, any
  $z_{\alpha},x$-path in $T$ has to pass through $K\setminus\{x,y\}$.
  Let $a_j$ be the first vertex of this path in the $z_{\alpha}$ to
  $x$ direction where $a_j \in K\setminus \{x,y\}$.
  Now, $$\dist_T(z_{\alpha}, a_j) \geq \dist_{G\setminus\{x,
    y\}}(z_{\alpha}, a_j) \geq C$$ (by Fact~\ref{fact:2}) which
  implies

  $$\dist_T(z_{\alpha},x) > C+1$$ as required.
\end{proof}


\begin{theorem}\label{thm:main-theorem}
  For any constants $c \ge 1$ and $k \ge 3$, there exists a $k$-tree
  $G$ such that there is no $+c$ spanner $T$
  that has treewidth less than or equal to $k-1$.
\end{theorem}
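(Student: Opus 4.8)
The plan is to argue by contradiction. Assume that $G = S(k,h)$, a $(k,h)$-snowflake whose height $h$ will be fixed below, admits a spanner $T$ of treewidth at most $k-1$, and then exhibit an edge of $G \setminus T$ whose deficiency exceeds $c$. Since every edge $uv$ of $G$ has $\dist_G(u,v)=1$, any edge with $\deficiency(uv)=\dist_T(u,v)-1 > c$ shows that $T$ is not an additive $c$-spanner, which is the contradiction we seek.

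First I would invoke Fact~\ref{fact:1} to obtain a pair $x,y \in K_B$ and an $x,y$-successor $K^{\pi}$ of $K_B$ (at height $k-1$) that satisfies the $x,y$-path property; hence $xy \in G\setminus T$ and $\deficiency(x,y)\ge 1$. This is the base case of an induction with deficiency bound $C=1$. The inductive step is exactly Lemma~\ref{lem:1}: given a $(k+1)$-clique $K$ of height at least $k-1$ containing a pair with the path property and deficiency at least $C$, it produces, at the end of a chain of length $\alpha = 1 + (C-1)(k-2)$, a new $(k+1)$-clique $K^{\alpha}$ containing a pair $z_{\alpha}, w$ that again satisfies the relevant path property and has deficiency at least $C+1$. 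Renaming this pair $(x,y):=(z_{\alpha},w)$ restores the hypotheses of the lemma, so I would apply Lemma~\ref{lem:1} $c$ times in succession, with $C$ running through $1,2,\dots,c$. After the $c$-th application the pair in hand spans an edge with deficiency at least $c+1 > c$, the desired witness.

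The one routine point to verify is that the height $h$ is large enough for all $c$ chains to fit. Each clique $K^i$ of a chain is a child of $K^{i-1}$, hence one level deeper, so the application of Lemma~\ref{lem:1} with deficiency bound $C$ descends exactly $\alpha = 1 + (C-1)(k-2)$ levels. Starting from $K^{\pi}$ at height $k-1$ and summing over $C=1,\dots,c$, the final clique lies at height
\[
  (k-1) + \sum_{C=1}^{c}\bigl(1+(C-1)(k-2)\bigr) = (k-1) + c + (k-2)\binom{c}{2},
\]
a constant depending only on $k$ and $c$. Taking $h$ equal to this value guarantees that at each step the children needed to build the chain of Definition~\ref{chain} still exist, and that every clique encountered has height at least $k-1$, so that Lemma~\ref{lem:1} always applies. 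The nesting $\T(K^{\pi}) \supseteq \T(K^{\alpha}) \supseteq \cdots$ is automatic, so the entire argument lives inside $\T(K^{\pi})$ and needs no disjointness beyond what Lemma~\ref{lem:1} already uses internally.

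The substantive content has already been established in Fact~\ref{fact:1} (which is where treewidth $\le k-1$ is exploited, via forbidding a $K_{k+1}$ minor) and in Lemma~\ref{lem:1} (which drives the deficiency up using Fact~\ref{fact:2}). Consequently the theorem is essentially a disciplined iteration of Lemma~\ref{lem:1}, and the only real thing to get right is the invariant threaded through the iteration: that each output pair simultaneously satisfies a path property (so the lemma reapplies) and realizes a strictly larger deficiency, together with the height accounting. The likeliest pitfall is an off-by-one in the number of iterations or in the height sum, which I would sanity-check on the case $c=1$: Fact~\ref{fact:1} gives deficiency $\ge 1$, and a single application of Lemma~\ref{lem:1} with $C=1$, $\alpha=1$, raises it to $\ge 2 > 1$, matching the claimed count of $c$ applications. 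Combined with the $k=2$ case handled earlier, this establishes the result for all $k \ge 2$.
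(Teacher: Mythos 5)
Your proposal is correct and follows essentially the same route as the paper's proof: invoke Fact~\ref{fact:1} for the base pair in $K^{\pi}$, iterate Lemma~\ref{lem:1} a total of $c$ times to push the deficiency to $c+1 > c$, and check that a height that is constant in $k$ and $c$ suffices. The only (harmless) difference is bookkeeping: you compute the exact height $(k-1)+c+(k-2)\binom{c}{2}$, while the paper simply takes the generous bound $kc^{2}$ for the snowflake $S(k,kc^{2})$.
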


\begin{proof}
  Let $G$ be the $(k, k c^2)$-snowflake
  $S(k, k c^{2})$.  Fact~\ref{fact:1} guarantees the existence of
  $K^{\pi}$ (of height $k-1$) where there is a pair of vertices $x, y
  \in K^{\pi}$ with $K^{\pi}$ satisfying the
  $x,y$-path property;
  clearly the deficiency $\deficiency(x,y) \geq 1$.

  Applying Lemma~\ref{lem:1} to $K^{\pi}$ and this particular pair of
  vertices $x, y$, we identify a $(k+1)$-clique $K^{\alpha}$ and
  vertices $z_{\alpha}, w$, where $w \in \{x, y\}$ such that
  $K^{\alpha}$ satisfies the $z_{\alpha}, w$-path property and
  $\deficiency(z_{\alpha}, w) \geq 2$.  Continuing to iteratively
  apply Lemma~\ref{lem:1} $c-1$ times to the $(k+1)$-cliques and
  identified pairs of vertices, we prove the existence of an edge of
  deficiency greater than $c$.

  We now show that $kc^2$ is a sufficient height for the snowflake.
  By Lemma~\ref{lem:1}, each of the at most $c-1$ iterations requires
  the height of the snowflake to be increased by $1+(C-1)(k-2)$ for
  $C$ ranging from 1 to $c-1$.  Thus height $kc^2$ suffices.
\end{proof}



{\small%
\begin{figure}
 \begin{center}
   \psfrag{ppp}{\scriptsize$\dots$}%
   \psfrag{K}{\scriptsize$K_{B}\setminus\{x, y\}$}%
   \psfrag{Kpi}{\scriptsize$K^{\pi}\setminus\{x, y\}$}%
   \psfrag{K0}{\scriptsize$=K^0\setminus \{x, y\}$}%
   \psfrag{Ka}{\scriptsize$K^{\alpha}\setminus\{x, y\}$}%
   \psfrag{x}{\scriptsize$x$}%
   \psfrag{y}{\scriptsize$y$}%
   \psfrag{a1}{\scriptsize$a_1$}%
   \psfrag{a2}{\scriptsize$a_2$}%
   \psfrag{ak-1}{\scriptsize$a_{k-1}$}%
   \psfrag{-a1}{\scriptsize$-a_1$}%
   \psfrag{-a2}{\scriptsize$-a_2$}%
   \psfrag{-ak-1}{\scriptsize$-a_{k-1}$}%
   \psfrag{z0}{\scriptsize$-a_{k-1}$}%
   \psfrag{z1}{\scriptsize$z_1$}%
   \psfrag{z2}{\scriptsize$z_2$}%
   \psfrag{zi}{\scriptsize$z_i$}%
   \psfrag{za}{\scriptsize$z_{\alpha}$}%
   \psfrag{zak2}{\scriptsize$z_{\alpha-k+2}$}%
   \psfrag{z1s}{\scriptsize$z'_1$}%
   \psfrag{z2s}{\scriptsize$z'_2$}%
   \psfrag{zas}{\scriptsize$z'_{\alpha}$}%
   \psfrag{zak2s}{\scriptsize$z'_{\alpha-k+2}$}%
   \includegraphics[width=14cm]{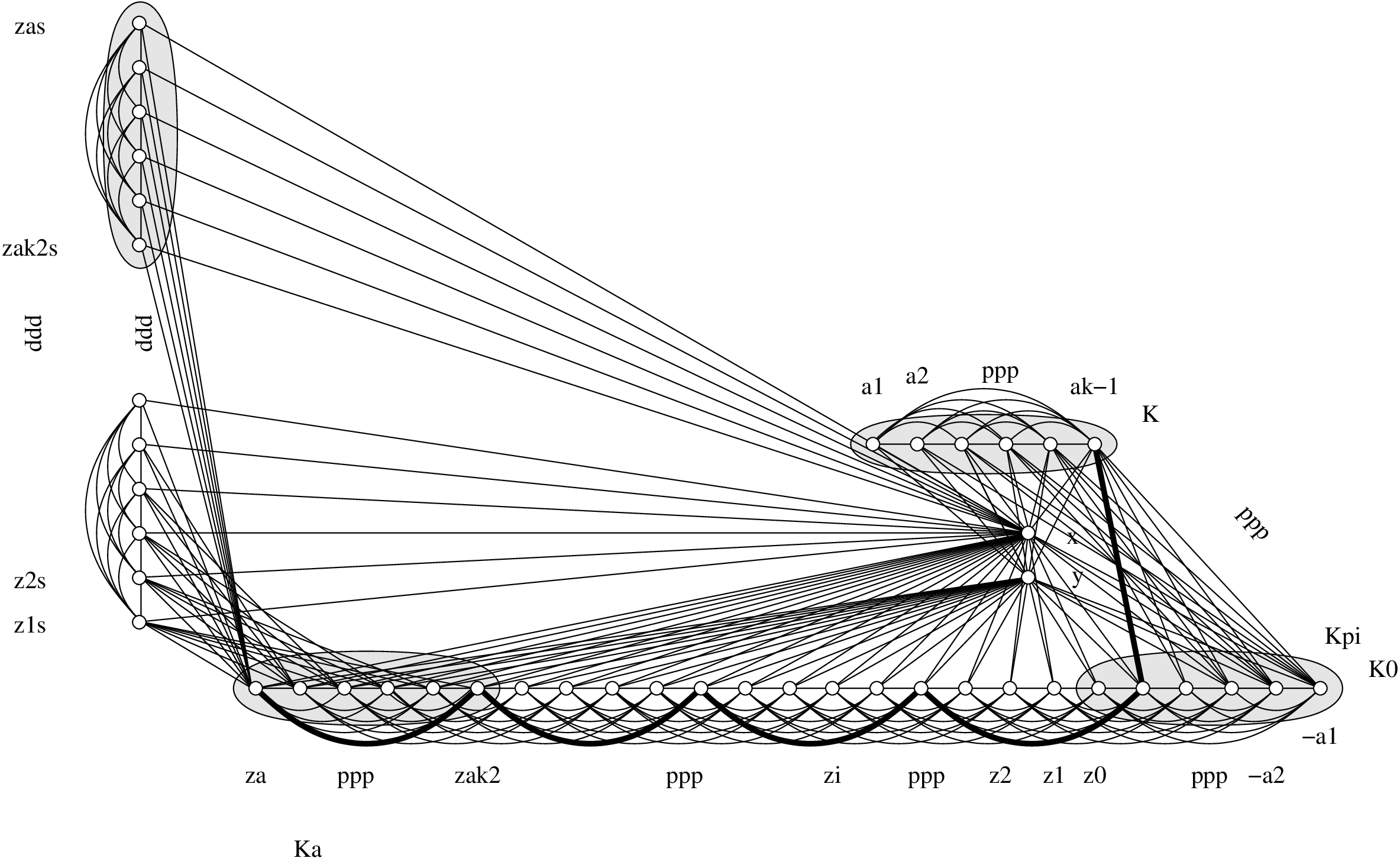}
 \end{center}
 \caption{The diagram shows the next induction step following the
   proof of Theorem~\ref{thm:main-theorem} of the graph in
   Figure~\ref{fig:fact2}.  Vertex $x$ takes over the role of $w$.}
 \label{fig:main-theorem-ind}
\end{figure}
}%

\section{Concluding Remarks}\label{concl}


There are a number of immediate questions building off our results:
\begin{itemize}
\item Can our lower bound techniques be used to prove other lower
  bound results on collective tree spanners and on spanners of bounded
  treewidth?

\item For chordal graphs are there any values of $c > 3$ where there
  is no constant sized set of spanning trees that collectively $+c$
  spans?

\item For any families presented in Table \ref{tbl:summary} can the
  gap between the lower and upper bounds for the size of systems of
  collective additive tree spanners be tightened?

\item Are there families of graphs that admit (do not admit)
  $O(1)$-spanners with treewidth 2 (or 3, or any small constant)?

\end{itemize}

More generally, looking at the results presented in Table
\ref{tbl:summary} we can state that a family of graphs belongs to
${\cal F}(s(n),\mu(n))$ if every graph $G$ from ${\cal
  F}(s(n),\mu(n))$ admits a system of at most $\mu(n)$ collective
additive tree $s(n)$-spanners.  From this perspective all interval
graphs, AT-free graphs, strongly chordal graphs belong to ${\cal
  F}(O(1),O(1))$ meaning that every graph $G$ from these families can
be $+c$-spanned, for some constant $c$, with at most a constant number
of spanning trees. All graph classes from Table \ref{tbl:summary}
belong to ${\cal F}(O(1),O(\log n))$, however weakly chordal graphs
and outerplanar graphs are in fact in ${\cal F}(O(1),O(\log
n))\setminus {\cal F}(O(1),O(1))$ while for chordal graphs this is
unknown. The open question mentioned earlier for chordal graphs can be
stated as follows: Is the class of chordal graphs in ${\cal
  F}(O(1),O(1))$?  Also, what other known graph families belong to
${\cal F}(O(1),O(1))$ or ${\cal F}(O(1),O(\log n))\setminus {\cal
  F}(O(1),O(1))$? More generally, given a family of graphs,
how many spanning trees are necessary/sufficient to collectively
$O(1)$-span any graph in the family?
   

%
%
%
%

\bigskip{}

\paragraph{Acknowledgements:}
DGC wishes to thank the Natural Sciences and Engineering Research
Council of Canada for financial assistance in the support of this
research.


\bibliography{05-lower-bound}
\end{document}